\newcommand{\bL}{\mathbf{L}}
\newcommand{\bR}{\mathbf{R}}
\newcommand{\bbT}{\mathbb{T}}
\newcommand{\caC}{{\mathcal C}}
\newcommand{\caA}{{\mathcal A}}
\newcommand{\caS}{{\mathcal S}}
\newcommand{\caT}{{\mathcal T}}
\newcommand{\caL}{{\mathcal L}}
\newcommand{\caK}{{\mathcal K}}
\newcommand{\integers}{\mathbf{Z}}
\newcommand{\bH}{\mathbb{H}}
\newcommand{\bQ}{\mathbb{Q}}
\newcommand{\bA}{\mathbb{A}}
\newcommand{\Hom}{\mathrm{Hom}}
\newcommand{\Ho}{\mathsf{Ho}}
\newcommand{\colim}{\mathrm{colim}}
\newcommand{\Comm}{\mathrm{Comm}}
\newcommand{\LQ}{\mathsf{L}\mathbf{Q}}
\newcommand{\shvnis}{\mathsf{Shv}_{\mathit{Nis}}}
\newcommand{\smcor}{\mathsf{SmCor}}
\newcommand{\Cpx}{\mathsf{Cpx}}
\newcommand{\ztr}{\integers_{\mathit{tr}}}
\newcommand{\zequi}{\mathrm{z}_{\mathit{equi}}}
\theoremstyle{theoremstyle}
\newtheorem{theorem}{Theorem}[section]
\newtheorem*{theorem*}{Theorem}
\newtheorem{lemma}[theorem]{Lemma}
\newtheorem{proposition}[theorem]{Proposition}
\newtheorem*{proposition*}{Proposition}
\newtheorem{corollary}[theorem]{Corollary}
\newtheorem*{corollary*}{Corollary}
\newtheorem*{conjecture*}{Conjecture}
\newtheorem{definition}[theorem]{Definition}
\newtheorem{definition*}{Definition}
\newtheorem{remark}[theorem]{Remark}
\newtheorem{remark*}{Remark}
\newcommand{\D}{\mathsf{D}}
\newcommand{\PP}{\mathbf{P}}
\newcommand{\Gr}{\mathbf{Gr}}
\newcommand{\KGL}{\mathsf{KGL}}
\newcommand{\MGL}{\mathsf{MGL}}
\newcommand{\PMGL}{\mathsf{PMGL}}
\newcommand{\MU}{\mathsf{MU}}
\newcommand{\MZ}{\mathsf{M}\mathbf{Z}}
\newcommand{\Sm}{\mathsf{Sm}}
\newcommand{\unit}{\mathbf{1}}
\newcommand{\Thom}{\mathsf{Th}}
\newcommand{\mMod}{\mathrm{-Mod}}
\newcommand{\id}{\mathrm{id}}
\renewcommand{\S}{\mathbb{S}}
\newcommand{\sP}{\mathsf{P}}
\newcommand{\DM}{\mathsf{DM}}
\title{Periodizable motivic ring spectra}
\author{Markus Spitzweck}
\date{\today}
\begin{document}

\pagestyle{plain}
\maketitle

\begin{abstract}
We show that the cellular objects in the module category over a motivic $E_\infty$-ring
spectrum $E$ can be described as the module category over a graded topological spectrum
if $E$ is strongly periodizable in our language. A similar statement is proven
for triangulated categories of motives. Since $\MGL$ is strongly periodizable we
obtain topological incarnations of motivic Landweber spectra. Under some categorical
assumptions the unit object of the model category
for triangulated motives is as well strongly periodizable giving motivic cochains whose
module category models integral triangulated categories of Tate motives.
\end{abstract}

\tableofcontents

\section{Introduction}

In \cite{km} graded $E_\infty$-algebras have been constructed whose module
categories are candidates for triangulated categories of Tate motives over a given field.
Since then many approaches to triangulated categories of motives were developed,
most notably Voevodsky's approach \cite{voevodsky.triangulated}.
Thus the question arises if one can directly construct graded $E_\infty$-algebras
from these motivic categories modelling Tate motives. Among other things we give
a solution to this problem, modulo standard categorical assumptions.
In \cite{joshua} $E_\infty$-motivic cochains have been
constructed (but see \cite{may.sheaf-coh}) without adressing the comparison of the module category to Tate motives.

In \cite{spitzweck-nistech} and \cite{spitzweck-mot} rational cycle complexes have been constructed
whose module categories model rational triangulated categories of Tate motives, for a summary
see \cite[II.5.5.4, Th. 111, II.5.5.5]{levine.survey-mixed-motives}. In this paper we give generalizations of
these constructions to integral triangulated categories of Tate motives.

Our approach emphasizes the notion of a strongly periodizable $E_\infty$-algebra.
In the motivic context a strong periodization of an $E_\infty$-ring spectrum $E$
is a graded $E_\infty$-ring spectrum $P$ such that in the stable homotopy category we have
an isomorphism
\begin{equation} \label{per-look}
P \cong \bigvee_{i \in \integers} \Sigma^{2i,i} E
\end{equation}
with the obvious multiplication. Here $\Sigma^{p,q}$ is the usual motivic shift functor of simplicial
degree $p$ and Tate degree $q$. Note that in order the right hand side of (\ref{per-look})
to be a commutative monoid in the motivic stable homotopy category we need some assumptions
on $E$ or the base scheme, namely the map $E \wedge T^2 \to E \wedge T^2$ which is the twist in
the second variable should be the identity.

Theorem (\ref{rep-th}) implies that if $E$ admits a strong periodization then the cellular objects
in the derived category of $E$-modules have a description in terms of a module category
over a graded topological $E_\infty$-ring spectrum. A similar statement is true for $E$
an $E_\infty$-ring object in the category of motives over a given base.

Thus to find good representations of cellular objects it is necessary to prove
strong periodizability of a given $E_\infty$-ring object. In section \ref{examples}
we do this for the motivic cobordism spectrum $\MGL$ and the unit object in the category
of motives over a field $k$ of characteristic $0$. The construction of the former
is a generalization of the construction of a strict commutative ring model of $\MGL$ in \cite{PPR2}.
The strategy for the latter is as follows:
first we construct a semi periodization, i.e. an $E_\infty$-algebra $P$
such that $$P \cong \bigoplus_{i \le 0} \integers(i)[2i]$$
as algebra in the triangulated category of motives over $k$ using explicit cycle groups.
Then we employ a localization technique (proposition (\ref{semi-per})) to construct a strong
periodization. It is here where we need some assumptions from the theory
of $\infty$-categories. We summrize these in section \ref{prelim}.
Under these assumptions we thus prove that there is a graded $E_\infty$-algebra
in complexes of abelian groups whose derived category of modules is equivalent
as tensor triangulated category to the
full subcategory of Tate motives in Voevodsky's category of big motives,
see corollary (\ref{mot-rep}).

Likewise we obtain a representation theorem for the full subcategory
of cellular objects in the derived category of $\MGL$-modules, see corollary
(\ref{mgl-rep}). As another corollary, under our categorical assumptions, we obtain the strong periodizability of
the motivic Eilenberg MacLane spectrum over perfect fields due to the work
of Voevodsky \cite{voevodsky-zero-slice} and Levine \cite{levine-htp} on
the zero slice of the sphere spectrum, see corollary (\ref{mz-rep}).

Since motivic Landweber spectra have incarnations as cellular highly structured $\MGL$-modules
\cite{NSO1} we thus obtain topological models of these motivic Landweber spectra.

Here is an overview of the sections. In section \ref{per-alg} we first give general background on
$E_\infty$-algebras in model categories. Then we give the definition of being
strongly periodizable and prove the abstract representation theorem (\ref{rep-th}). Moreover
we show that under our categorical assumptions the existence of a semi periodization implies
the existence of a periodization, proposition (\ref{semi-per}). Finally we show that every algebra
which receives a map from a strongly periodizable algebra is itself strongly periodizable,
proposition (\ref{alg-periodizable}). Section \ref{symm-bimor} contains
the technical part to show that our constructions for $\MGL$ and the unit sphere in motives
are indeed strong periodizations. Section \ref{examples} contains our examples
of strongly periodizable algebras and the applications to representation theorems.

{\bf Acknowledgements.} The author thanks Christian Blohmann, Ulrich Bunke, Oliver R\"ondigs and Ansgar Schneider
for useful discussions on the subject.

\section{Preliminaries} \label{prelim}

In this text we have to deal with commutative algebras in a homotopical
setting. In special cases one can directly work with commutative algebras,
e.g. in symmetric spectra with the positive model structure. We have chosen
to use the language of $\S$-modules as in \cite{ekmm} or \cite{km}
and the general setting of \cite{spitzweck-thesis} which is
adapted to an abstract formulation of the problem we are discussing.

We will freely deal with the language of $\infty$-categories as
introduced in \cite{lurie-topoi}. We will make the assumption that
the $\infty$-categories associated to the model or semi model categories
appearing in this text are presentable in the language of \cite{lurie-topoi}.
Among other things this enables us to localize these $\infty$-categories, see \cite{lurie-topoi}.
Note also that the theory of presentable $\infty$-categories is equivalent
to the theory of combinatorial model categories \cite[Rem. 5.5.1.5]{lurie-topoi}.

Also we will assume that the theory of algebras
in the (semi) model category setting and the $\infty$-category setting
are compatible. E.g. we will assume that the $\infty$-category associated
to the semi model category of $E_\infty$-algebras in a given suitable symmetric
monoidal model category $\caC$ is equivalent to the $\infty$-category of commutative algebras
in the symmetric monoidal $\infty$-category associated to $\caC$.
The same applies in the relative setting of algebras over a given algebra.

We also suppose that the $\infty$-categories of modules which appear are finitely
generated and stable. In particular the associated triangulated categories will
be compactly generated.

We call these assumptions our {\em categorical assumptions}.

We use them only twice when we localize a semi periodization of
a given $E_\infty$-algebra to obtain a periodization and when we talk about
the zero slice of $\MGL$ as a motivic $E_\infty$-ring spectrum.

\section{Conventions}

If $\caC$ is a category and $A \in \caC^\integers$ a graded object we will write $A_r$ for the object
in degree $r$. We let $A(r)$ be the shift given by $A(r)_k=A_{k-r}$.

An {\em $\Omega$-spectrum} will be a spectrum $X$ such that the derived adjoints
of the structure maps, $X_n \to \underline{\bR \Hom}(K,X_{n+1})$, are equivalences.
Here $K$ is the object with which we build the spectra.

When dealing with symmetric spectra we have to be very careful about the symmetric
group actions, for the convenience of the reader we refer for that to the Manipulation
rules for coordinates, \cite[Remark I.1.12]{schwede.book}.

\section{Periodizable $E_\infty$-algebras} \label{per-alg}

In this section we develop the abstract context in which categories
of cellular objects will be modelled by modules over graded $E_\infty$-ring spectra resp. algebras.

Let $\caC$ be a cofibrantly generated left proper symmetric monoidal model category.
We assume that the domains of the generating sets $I$ and $J$ are small relative
to the whole category and that the tensor unit and the domains of the maps in $I$ are cofibrant.

Let $\caS$ be the category of symmetric spectra in simplicial sets equipped with the stable
projective model structure. Also let $\caA$ be the category of (unbounded) chain complexes of
abelian groups equipped with the projective model structure. Both categories fulfill the assumptions
for $\caC$.

In the whole section we will assume that $\caC$ either receives a symmetric monoidal left Quillen functor $l$
from $\caS$ or from $\caA$. We denote by $\caL$ the image of the linear isometries
operad either in $\caS$, $\caA$ or $\caC$, depending in which category we talk about $\caL$-algebras
or $\S$-modules.

We set $\S:=\caL(1)$ which is a monoid. W let $\S \caS$, $\S \caA$ and $\S \caC$ be the categories
of $\S$-modules in the respective categories. By \cite[Proposition 9.3]{spitzweck-thesis}
these are symmetric monoidal model categories with weak unit. The tensor product is given by
$M \boxtimes N = \caL(2) \otimes_{\S \otimes \S} (M \otimes N)$. The pseudo tensor unit is $\unit_\caS$, $\unit_\caA$,
$\unit_\caC$ resp. equipped with the trivial $\S$-module structure.

For the discussion of commutative algebras we only treat the case of $\caC$ since those for $\caS$ and $\caA$
are special cases thereof. We write $\Comm(\caC)$ for the category of $\caL$-algebras in $\caC$. This is the same
as the category of commutative monoid objects in the symmetric monoidal category of unital
$\S$-modules, see \cite[Proposition 9.4]{spitzweck-thesis}. $\Comm(\caC)$ is a cofibrantly
generated semi model category by \cite[Corollary 9.7]{spitzweck-thesis}.

For $A \in \Comm(\caC)$ we write $\Comm(A)$ for algebras under $A$. By loc. cit. it is a semi model category
for cofibrant $A$. For a map $f \colon A \to B$ between cofibrant algebras the induced map $\Comm(A) \to \Comm(B)$
is a left Quillen functor which is an equivalence if $f$ is.

We denote by $A \mMod$ the category of $A$-modules. It is a symmetric monoidal category with pseudo unit,
see \cite[after Def. 9.8]{spitzweck-thesis}. If $A$ is cofibrant then by \cite[Proposition 9.10]{spitzweck-thesis}
it is a symmetric monoidal model category with weak unit. By loc. cit. if $f \colon A \to B$ is a map between
cofibrant algebras then the push forward $f_*$ is a symmetric monoidal left Quillen functor which is a Quillen
equivalence if $f$ is an equivalence.

We set $\D(A):=\Ho (QA \mMod)$ where $QA \to A$ is a cofibrant replacement.

We next introduce graded objects. Again we treat the case for $\caC$.

We let $\caC^\integers$ be the category of $\integers$-graded ojects in $\caC$, i.e.
the $\integers$-fold product of $\caC$ with itself. We employ the symmetric monoidal structure
on $\caC^\integers$ which is given on objects by
$((a_i)_{i \in \integers} \otimes ((b_j)_{j \in \integers})_k= \bigsqcup_{i+j=k} a_i \otimes b_j$.

With these definitions $\caC^\integers$ satisfies the same assumptions as $\caC$, in particular it is a
cofibrantly generated symmetric monoidal model category. Thus the above discussion for $\caC$
applies likewise to $\caC^\integers$.

From now on we will fix a cofibrant object $K \in \caC$ which is $\otimes$-invertible in $\Ho \caC$.

Let $A$ be a commutative monoid in $\Ho \caC$. We define its periodization $P(A)$ to be the monoid
in $\Ho \caC$ with underlying object $\bigsqcup_{i \in \integers} A \otimes K^{\otimes i}$ and multiplication
induced by the maps $(A \otimes K^{\otimes i}) \otimes (A \otimes K^{\otimes j}) \cong A \otimes A \otimes K^{\otimes (i+j)}
\to A \otimes K^{\otimes (i+j)}$.
When we consider this periodization we will always assume
that $\id_A \otimes \tau \colon A \otimes K^{\otimes 2} \to A \otimes K^{\otimes 2}$, $\tau \colon K^{\otimes 2} \to K^{\otimes 2}$ the twist,
is the identity.
Thus $P(A)$ becomes a commutative monoid.
The periodization $P(A)$ can be viewed as a commutative monoid in
$(\Ho \caC)^\integers$. Let $E \in \Comm(\caC)$. Then we can construct the periodization
as monoid in $\D(E)^\integers$. We denote this also by $P(E)$.

Next observe that there is a symmetric monoidal left Quillen functor $i \colon \caC \to \caC^\integers$ sending
$X$ to the sequence $(\ldots,\emptyset,\emptyset,X,\emptyset,\emptyset , \ldots)$, where $X$ sits in degree $0$.

\begin{definition}
Let $E \in \Comm(\caC)$. The algebra $E$ is called {\em strongly periodizable} if there is an algebra $P \in
\Ho(\Comm(Qi(E)))$, $Qi(E) \to i(E)$ a cofibrant replacement,
such that $P$ becomes isomorphic to $P(E)$ as monoid in $\D(E)^\integers$ under the image of $i(E)$.
A map $i(E) \to P$ satisfying this assumption will be called a {\em strong periodization} of $E$.
\end{definition}

Note that if $f \colon i(E) \to P$ is a strong periodization then $f$ induces an equivalence in degree $0$.

For a commutative monoid $A$ in $\Ho \caC$ we let $P_+(A)$ be the algebra
$\bigsqcup_{i \in \integers_{\ge 0}} A \otimes K^{\otimes i}$ in $(\Ho \caC)^{\integers_{\ge 0}}$ and
$P_-(A)$ be the algebra
$\bigsqcup_{i \in \integers_{\le 0}} A \otimes K^{\otimes i}$ in $(\Ho \caC)^{\integers_{\le 0}}$ (here again we make implicitely
the assumption on the twist).

For $E \in \Comm(\caC)$ we denote by $P_\pm(E)$ also the corresponding algebras in $\D(E)^{\integers_{\gtreqless 0}}$.

We let $i_\pm \colon \caC \to \caC^{\integers_{\gtreqless 0}}$ be the canonical
symmetric monoidal left Quillen functors.

We say that an $E \in \Comm(\caC)$ admits a {\em semi periodization} if there is an algebra $P \in \Ho(\Comm(Qi_\pm(E)))$
such that $P$ becomes isomorphic to $P_\pm(E)$ as monoid in $\D(E)^{\integers_{\gtreqless 0}}$ under the image of $i_\pm(E)$.

\begin{proposition} \label{semi-per}
 Suppose that $E \in \Comm(\caC)$ admits a semi periodization. Suppose our categorical assumptions hold.
Then $E$ is strongly periodizable.
\end{proposition}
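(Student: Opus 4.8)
The plan is to obtain the periodization as a localization of the given semi periodization, carried out in the $\infty$-categorical incarnation provided by our categorical assumptions. Assume for definiteness that $E$ admits a semi periodization $P \in \Ho(\Comm(Qi_+(E)))$ with image $P_+(E)$ in $\D(E)^{\integers_{\ge 0}}$; the case of $P_-(E)$ is entirely analogous, interchanging $\integers_{\ge 0}$ with $\integers_{\le 0}$ and $K$ with $K^{\otimes -1}$. By our categorical assumptions the semi model categories $\Comm(Qi_+(E))$ and $\Comm(Qi(E))$ have presentable associated $\infty$-categories, equivalent to the $\infty$-categories of commutative algebra objects in the presentable stable symmetric monoidal $\infty$-categories $\caT_{\ge 0}$ and $\caT$ of $\integers_{\ge 0}$-graded, respectively $\integers$-graded, $E$-modules; so $P$ gives a commutative algebra in $\caT_{\ge 0}$.

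First I would move $P$ into $\caT$. The inclusion of additive monoids $\integers_{\ge 0} \hookrightarrow \integers$ induces, by left Kan extension -- that is, extension of a graded object by the zero module in the degrees that get added -- a colimit preserving strong symmetric monoidal functor $i_! \colon \caT_{\ge 0} \to \caT$ (already at the level of model categories $i_!$ is left Quillen and strong symmetric monoidal), hence a functor $\Comm(\caT_{\ge 0}) \to \Comm(\caT)$. The commutative algebra $i_!(P)$ has underlying graded $E$-module $\bigsqcup_{i \ge 0} E \otimes K^{\otimes i}$, concentrated in nonnegative degrees. Write $L := E \otimes K$, an invertible $E$-module, and let $L(1) \in \caT$ be $L$ placed in degree $1$; then $L(1)$ is $\otimes$-invertible in $\caT$ with inverse $L^{\otimes -1}(-1) = (E \otimes K^{\otimes -1})(-1)$. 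Restricting the multiplication of $i_!(P)$ to the degree one summand of the left factor produces a map $\sigma \colon L(1) \otimes_{i(E)} i_!(P) \to i_!(P)$ which in each graded degree $\ge 1$ is the corresponding structure map of $P_+(E)$, hence an equivalence (these are equivalences precisely because one works relative to $E$ and $K$ is invertible).

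The key step is to invert $\sigma$. I would form
\[
\widetilde{P} \;:=\; \colim\bigl( i_!(P) \to L^{\otimes -1}(-1) \otimes_{i(E)} i_!(P) \to L^{\otimes -2}(-2) \otimes_{i(E)} i_!(P) \to \cdots \bigr),
\]
the transition maps being $\sigma$ tensored with successive powers of the invertible object $L^{\otimes -1}(-1)$. Since $L(1)$ is $\otimes$-invertible this is a smashing localization; it realizes the localization of the commutative algebra $i_!(P)$ obtained by inverting the twisted element $\sigma$, and by the standard theory of such localizations in a presentable symmetric monoidal $\infty$-category -- the analogue for an invertible twist of inverting an element of $\pi_0$ of an $E_\infty$-ring -- the object $\widetilde{P}$ inherits a commutative algebra structure in $\caT$ together with a map $i(E) \to \widetilde{P}$. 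This is where our categorical assumptions are genuinely used, and it is the main obstacle: one must know that $\caT$ and $\caT_{\ge 0}$ are presentable symmetric monoidal $\infty$-categories compatible with the algebra theory at the model level, and that the localization of $\Comm(\caT)$ at $\sigma$ exists, is smashing, and is computed by the telescope above.

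It then remains to identify $\widetilde{P}$ with $P(E)$. As the forgetful functor from commutative algebras to $\caT$ preserves filtered colimits, the underlying graded $E$-module of $\widetilde{P}$ is the telescope colimit, computed degreewise: in degree $k$ all but finitely many transition maps are equivalences -- in the relevant range they are the structure maps of $P_+(E)$ -- so that degree is $E \otimes K^{\otimes k}$, giving underlying object $\bigsqcup_{i \in \integers} E \otimes K^{\otimes i}$, that of $P(E)$. The map $i(E) \to \widetilde{P}$ is an equivalence in degree $0$; and after tensoring with a large power of the invertible object $L(1)$, the multiplication of $\widetilde{P}$ in any given pair of degrees becomes the corresponding multiplication of $i_!(P)$, i.e. of $P_+(E)$, which forces it to agree with the evident multiplication defining $P(E)$. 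Hence $\widetilde{P} \cong P(E)$ as monoids in $\D(E)^\integers$ under the image of $i(E)$. Finally, transporting $\widetilde{P}$ back through the equivalence of our categorical assumptions to an object of $\Ho(\Comm(Qi(E)))$ yields a strong periodization of $E$; thus $E$ is strongly periodizable.
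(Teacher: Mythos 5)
Your proof is correct and takes essentially the same approach as the paper: both obtain the full periodization from the semi periodization by inverting the degree-one shift element via a telescope-type (smashing) localization in the $\infty$-categorical incarnation afforded by the categorical assumptions, then identify the resulting algebra degree-by-degree with $P(E)$. The paper spells out a bit more carefully the passage between the smashing localization of the module $\infty$-category (its $L$, equal to your telescope) and the corresponding localization of $\Comm(QP)$ at the free algebra map on the multiplication map $\kappa$ (your $\sigma$), and it pins down the multiplication on the local object via unitality and a bar/coequalizer diagram rather than your ``tensor with a large power of $L(1)$'' reduction, but these are presentational differences rather than a different route.
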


\begin{proof}
 This is the standard technique of inverting elements in $E_\infty$-ring objects.
Wlog we handle the case that $E$ has a semi periodization in $\Comm(\caC^{\integers_{\ge 0}})$.
Let $Qi_+(E) \to P$ be such a semi periodization, $Qi_+(E)$ a cofibrant
replacement. We denote by $P$ also the image of $P$
with respect to the functor $\Comm(\caC^{\integers_{\ge 0}}) \to \Comm(\caC^\integers)$.
Thus we have a map $Qi(E) \to P$ in $\Comm(\caC^\integers)$.
The element in $P$ we want to invert is the map $a \colon i(K)(1) \to P$ in $(\Ho \caC)^{\integers}$
corresponding to the map $K \to A \otimes K$ given by the unit of $A$ and the identity on $K$.

We let $\kappa \colon i(K)(1) \otimes P \to P$ be the map in $\D(P)$ given by multiplication with $a$.
For a lift $\tilde{\kappa}$ of $\kappa$ to $QP \mMod$ for a cofibrant replacement $QP \to P$
as a map between cofibrant $QP$-modules
we consider the free $QP$-algebra map $F_{QP}(\tilde{\kappa})$ on $\tilde{\kappa}$.
We denote by $L_\mathrm{alg}$ the localization functor on the $\infty$-category associated
to $\Comm(QP)$ which inverts $F_{QP}(\tilde{\kappa})$. We claim that a local model
of $QP$ with respect to $L_\mathrm{alg}$ will yield a strong periodization of $E$.

Therefore we first localize the category of $P$-modules. We let $L$ denote
the localization functor on the $\infty$-category associated to $QP \mMod$ which inverts
$\kappa$. On homotopy categories it has the same effect as localizing
$\D(P)$ with respect to the full localizing triangulated subcategory spanned by the (co)fiber of
$\kappa$. For any $M \in \D(P)$ we denote $\kappa_M$ the map $\kappa \otimes_P \id_M$
or suitable twists by tensor powers of $i(K)(1)$ thereof. The local objects in $\D(P)$ are
exactly the modules $M$ such that $\kappa_M$ is an isomorphism. Moreover, by adjunction, the
local objects in $\Ho(\Comm(QP))$ are the algebras which are local as $P$-modules.

Since the $\infty$-category of $P$-modules is finitely generated it is easily seen that
$L$ is given by a (homotopy) colimit
\begin{equation} \label{loc-colim}
M \mapsto LM=\colim(M \overset{\kappa_M}{\to} M \otimes i(K)^{-1}(-1)
\overset{\kappa_M}{\to} M \otimes i(K)^{-2}(-2) \to \cdots)
\end{equation}

(the transition maps are local equivalences, thus the map from $M$ to the colimit
is also a local equivalence, and the colimit is local by a finite generation argument).

Thus we can write $LM=M \otimes_P LP$.
It follows that $L$ is compatible with the tensor product
on the triangulated categories and in the $\infty$-categorical setting
(for the latter see \cite[Def. 1.28, Prop. 1.31]{lurie-symm} and the discussion in
\cite[par. 5]{gepner-snaith}).

We let $\caK \to \caL$ be the symmetric monoidal localization functor from the $\infty$-category
of $P$-modules to the local objects. We get an induced adjunction $$F \colon \Comm(\caK) \leftrightarrow \Comm(\caL) \colon G$$
on commutative algebras in $\caK$ and $\caL$.
We claim this is the localization at the morphism $F_{QP}(\tilde{\kappa})$.
First $G$ is a full embedding: the counit is an isomorphism since it is so on the underlying modules.
We see the image of $G$ is exactly the subcategory of $F_{QP}(\tilde{\kappa})$-local objects which settles the claim.

Thus we get a $F_{QP}(\tilde{\kappa})$-local model of $P$ by the unit $P \to GF(P)$.

We have to detect the algebra structure of $GF(P)$ as algebra in $\D(E)^\integers$.

First on the level of modules we have $LP \otimes_P LP \cong LP$. This makes $LP$ into an algebra in $\D(P)$.
(This is what is called a $P$-ring spectrum in \cite{ekmm}.) This algebra $LP$ is clearly the image of $GF(P)$
in commutative monoids in $\D(P)$. By forgetting the $P$-module structure (i.e. applying the lax symmetric monoidal
functor $\D(P) \to \D(E)^\integers$) we get the algebra we want to know. 

Since the localization $L$ is given by the colimit (\ref{loc-colim}) we see that $LP$ as
module over $i_>(P_+(E))$, $i_>$ the functor $\D(E)^{\integers_{\ge 0}} \to \D(E)^\integers$, has
the form $(\ldots,E\otimes K^{-2},E \otimes K^{-1}, E , E \otimes K, E \otimes K^2, \ldots)$
with the obvious multiplication (note that the maps from the stages of the colimit to the colimit have
to be compatible with the multiplication).

Since on the level of model categories the tensor product $LP \otimes_P LP$
is gotten from $LP \otimes_{i(E)} LP$ by a coequalizer diagram we exhibit a diagram
$$LP \otimes_{i(E)} P \otimes_{i(E)} LP \rightrightarrows LP \otimes_{i(E)} LP \to LP \otimes_P LP$$
(all tensor products are derived tensor products).
On the degree $0$ part of the image of $GF(P)$ in $\D(E)^\integers$ we know already the multiplication
on the whole of the image of $GF(P)$ because of the unitality property.

The above diagram then forces the multiplication on the image of $GF(P)$ to be the one claimed.
\end{proof}

Next the left Quillen functor $l \colon \caS \to \caC$ resp. $l \colon \caA \to \caC$ comes into play.
We denote ly $l^\integers$ the prolonguation of $l$ to $\integers$-graded objects. We denote by $r$ the
right adjoint to $l$, thus $r^\integers$ is the right adjoint to $l^\integers$.

Given an $E_\infty$-algebra $E$ in $\caC^\integers$ we can look at its imgae under $r^\integers$
and study the module category of this algebra.

\begin{theorem} \label{rep-th}
 Let $E \in \Comm(\caC)$ be cofibrant and let $g \colon i(E) \to P$ be a strong periodization.
Assume wlog that $g$ is a cofibration and $P$ is fibrant. Then $\D(r^\integers(P))$ is canonically equivalent to the localizing
full triangulated subcategory of $\D(E)$ spanned by the spheres $E \otimes K^i$, $i \in \integers$,
as tensor triangulated category. Moreover this equivalence comes from Quillen functors between model
categories.
\end{theorem}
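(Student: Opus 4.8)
The plan is to realize the equivalence as a composite
$$\D(r^{\integers}(P))\ \xrightarrow{\ \bL\Phi\ }\ \caT_P\ \hookrightarrow\ \D(P)\ \xrightarrow{\ (-)_0\ }\ \D(E),$$
where $\D(P)$ is the homotopy category of $P$-modules in $\caC^{\integers}$ and $\caT_P\subseteq\D(P)$ is the localizing triangulated subcategory generated by the shifts $P(d)$, $d\in\integers$. I treat only the case $l\colon\caS\to\caC$, the case of $\caA$ being identical. Since $\bL l^{\integers}$ is strong symmetric monoidal, the adjunction counit $l^{\integers}(Qr^{\integers}(P))\to P$ — with $Qr^{\integers}(P)\to r^{\integers}(P)$ a cofibrant replacement in $\Comm(\caS^{\integers})$ — is a map of commutative monoids in $\caC^{\integers}$, and extension of scalars along it is a left Quillen functor $\Phi$ from $r^{\integers}(P)$-modules to $P$-modules; its derived functor $\bL\Phi$ is exact, coproduct-preserving, strong symmetric monoidal, and sends the free module $r^{\integers}(P)(d)$ to $P(d)$.

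First I would show that $\bL\Phi$ is fully faithful with essential image $\caT_P$. The category $\D(r^{\integers}(P))$ is compactly generated by the free modules $r^{\integers}(P)(d)$, $d\in\integers$ (the unit objects generate $\caS^{\integers}$), and since $\bL\Phi$ is exact and coproduct-preserving its essential image is contained in $\caT_P$. For full faithfulness it suffices to check that $\bL\Phi$ is an isomorphism on the groups $[\,r^{\integers}(P)(d)[n],r^{\integers}(P)(d')\,]$ and then to propagate along triangles and coproducts, using compactness of the $r^{\integers}(P)(d)$ and of their images $P(d)$. By the free--forgetful adjunctions the source group is $\pi_n$ of $(r^{\integers}(P)(d'))_d=(r^{\integers}(P))_{d-d'}\simeq\bR r(P_{d-d'})$, and the target group is $[\,P(d)[n],P(d')\,]_{\D(P)}\simeq\pi_n\bR r(P_{d-d'})$ as well; since $P\cong P(E)$ as a monoid in $\D(E)^{\integers}$ we have $P_{d-d'}\cong E\otimes K^{d-d'}$, and the derived adjunction $\bL l\dashv\bR r$ matches the two identifications via the identity. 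As a fully faithful exact coproduct-preserving functor into an idempotent-complete triangulated category has localizing essential image, this image is exactly $\caT_P$, so $\bL\Phi$ is a symmetric monoidal equivalence onto $\caT_P$.

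Next I would identify $\caT_P$ with the localizing subcategory $\langle\,E\otimes K^i:i\in\integers\,\rangle$ of $\D(E)$ via the degree-zero functor. The degree-$0$ part $M_0$ of the underlying $\caC^{\integers}$-object of a $P$-module $M$ is naturally a module over $P_0$, and since a strong periodization is an equivalence in degree $0$ we may view $M\mapsto M_0$ as an exact, coproduct-preserving, lax symmetric monoidal functor $\D(P)\to\D(E)$ (it is the composite of restriction of scalars along the canonical map $i(P_0)\to P$ with the projection to the degree-$0$ factor, hence the derived functor of a right Quillen functor), and it sends $P(d)$ to $E\otimes K^{-d}$ because $P_k\cong E\otimes K^k$. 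For $N\in\caT_P$ the canonical map $N_0\otimes K^d\to N_d$ coming from the $P$-module structure is an isomorphism: this holds on the generators $P(e)$, hence on all of $\caT_P$, since both $N\mapsto N_d$ and $N\mapsto N_0\otimes K^d$ are exact and coproduct-preserving and the comparison is a natural transformation between them. Therefore $[\,P(d)[n],N\,]_{\D(P)}\simeq\pi_n\bR r(N_d)\simeq\pi_n\bR r(N_0\otimes K^d)\simeq[\,(P(d))_0[n],N_0\,]_{\D(E)}$ compatibly with $(-)_0$, so $(-)_0$ is fully faithful on $\caT_P$; the same identification makes the lax-monoidal structure map $M_0\otimes N_0\to(M\otimes_P N)_0$ an isomorphism on $\caT_P$ (on $P(d),P(e)$ both sides are $E\otimes K^{-d-e}$), so $(-)_0|_{\caT_P}$ is strong monoidal. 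Being fully faithful with the generators $E\otimes K^i$ in its essential image, it is a symmetric monoidal equivalence $\caT_P\xrightarrow{\ \sim\ }\langle\,E\otimes K^i\,\rangle$. Composing with $\bL\Phi$ yields the asserted tensor-triangulated equivalence, and since every functor used is one half of a Quillen adjunction and the two subcategories $\caT_P$, $\langle\,E\otimes K^i\,\rangle$ are homotopy categories of (co)localizations of the respective module model categories, the equivalence is realized by Quillen functors.

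The main obstacle I expect is the full faithfulness of $\bL\Phi$: the idea \emph{check on free modules and propagate} is routine, but the propagation requires the module categories in play to be compactly generated and $\bR r$ to preserve coproducts (equivalently $\bL l$ to preserve compact objects, equivalently $\unit_{\caC}$ to be compact), which holds under our standing hypotheses and in all the examples but must be invoked with care, and it requires keeping track of the chain of enriched mapping spectra under the adjunctions $l\dashv r$, free $\dashv$ forget, and extension $\dashv$ restriction of scalars. By comparison, the monoidal compatibilities and the identification of $\caT_P$ with $\langle\,E\otimes K^i\,\rangle$ are straightforward once the underlying equivalence is in hand.
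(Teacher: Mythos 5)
Your proposal is correct and follows essentially the same route as the paper: the functor you build is exactly the paper's $M\mapsto v(f_*(l^{\integers}(M)))$, you verify full faithfulness by computing Hom groups on the free generators $A(i)[k]$ through the same chain of adjunctions (free/forget, $l\dashv r$, extension/restriction of scalars), and you verify strong monoidality by the same reduction to generators. The only difference is organizational — you explicitly factor the functor through the intermediate subcategory $\caT_P\subseteq\D(P)$ and record the auxiliary fact $N_0\otimes K^d\cong N_d$ for $N\in\caT_P$, whereas the paper treats the composite as a single functor $F$ — but the computations and their justifications coincide.
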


\begin{proof}
We treat the case where $l \colon \caS \to \caC$, the case
$l \colon \caA \to \caC$ is analogous.

 Let $Qr^\integers(P) \to r^\integers(P)$ be a cofibrant replacement. Let $f$ be the composition
$l^\integers(Qr^\integers(P)) \to l^\integers(r^\integers(P)) \to
P$. Let $M \in Qr^\integers(P) \mMod$ be cofibrant. Then $f_*(l^\integers(M)) \in P \mMod$.
Let further $v$ be the functor which sends a graded object $X \in \caC^\integers$ to $X_0 \in \caC$.
The functor $v$ can be made into a lax symmetric monoidal functor, i.e. there are associative, commutative
and unital transformations $v(X) \otimes v(Y) \to v(X \otimes Y)$. This also extends to $\S$-modules.
In particular $v$ sends a $P$-module to a $P_0$-module and via pullback along $g_0$ to an $E$-module.
Thus $v(f_*(l^\integers(M))) \in E \mMod$. Altogether the assignment $M \mapsto v(f_*(l^\integers(M)))$
is lax symmetric monoidal and descends to a lax symmetric monoidal triangulated functor $$F \colon \D(r^\integers(P)) \to \D(E).$$

We claim $F$ is a symmetric monoidal full embedding with image the full localizing triangulated subcategory
spanned by the $E \otimes K^i$, $i \in \integers$.

We first equate the map $F$ on generating objects.
Let $A:=Qr^\integers(P)$. Note that $\D(A)$ is generated as
triangulated category with sums by the objects $A(i)$, $i \in \integers$.
Thus to show that $F$ is a full embedding it is sufficient
to show that $F$ induces isomorphisms on the Hom groups between
the $A(i)[k]$, $i,k \in \integers$. Now all functors involved
are compatible with the shifts $\_[k]$. The functor
$l^\integers$ is also compatible with the shifts $\_(i)$.
Since $\_(i)=\_ \otimes^\bL \unit(i)$
push forward along algebra maps is also compatible with
the shifts $\_(i)$. Thus $f_* \circ l^\integers$ is compatible
with the $\_(i)$. So we have $f_*(l^\integers(A(i)[k]))=P(i)[k]$.
Finally $(P(i)[k])_0 = E \otimes K^{-i} [k]$ and we get
$$F(A(i)[k])=E \otimes K^{-i} [k].$$

We set $T:=\_(i)[k]$. Let $\varphi \in \Hom_{\D(A)}(A,TA)$
By adjunction $\varphi$ corresponds to a map in
$\Hom_{\Ho(\caS^\integers)}(\unit,TA)$, this again corresponds to
a map $\psi \in \Hom_{\Ho(\caC^\integers)}(\unit,TP)$,
which is the same as a map in $\Hom_{\Ho \caC}(\unit,E \otimes K^{-i}[k])$.

Now the effect of $F$ on $\varphi$ is as follows.
First $\varphi$ is mapped to a map in
$$\Hom_{\D(l^\integers(A))}(l^\integers(A),T l^\integers(A)),$$
then by push forward to a map in $\Hom_{\D(P)}(P,TP)$.
By naturality and the properties of adjunctions
this map corresponds to $\psi$. We finally
see that both of the groups
$\Hom_{\D(A)}(A, TA)$ and $\Hom_{\D(E)}(E, E \otimes K^{-i}[k])$
are naturally isomorphic to $\Hom_{\Ho \caC}(\unit,E \otimes K^{-i}[k])$
and that the map induced by $F$ on these Homs corresponds to the
identity via these identifications.

We have proved that $F$ is a full embedding. Since $F$ is
compatible with sums we see that its image is closed under sums,
thus the statement about the image of $F$ follows.

We have to prove that $F$ is symmetric monoidal,
i.e. that the natural maps $F(X) \otimes F(Y) \to F(X \otimes Y)$
are isomorphisms.
We have to show that $v$ is symmetric monoidal (in a derived sense)
on the image of the functor $f_* \circ l^\integers$, where
on the left hand side we use the tensor product $\otimes_P$ and
on the right hand side we use $\otimes_E$.
Since the tensor product is triangulated and
compatible with sums it suffices for this
to show this property for the objects
$f_*(l^\integers(A(i)[k]))$, $i,k \in \integers$.

For this situation our claim follows from the definition 
of strong periodization and the
following general fact. If $B \in \Comm(\caC^\integers)$
(say cofibrant), $i(E) \to B$ a map of algebras, $X=B(i_1)[k_1]$,
$Y=B(i_2)[k_2]$, then the natural map
$$B_{-i_1} \otimes_E B_{-i_2} [k_1 + k_2] = v(X) \otimes_E v(Y)
\to v(X \otimes_B Y) = B_{-i_1 - i_2}[k_1 + k_2]$$
is given by the multiplication in $B$.
\end{proof}

\begin{proposition} \label{alg-periodizable}
 Let $E \in \Comm(\caC)$ be strongly periodizable and let $E \to E'$ be
a map in $\Comm(\caC)$. Then $E'$ is strongly periodizable.
\end{proposition}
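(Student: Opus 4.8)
The plan is to obtain the required periodization of $E'$ by base change along the given map $\phi \colon E \to E'$. First I would fix a strong periodization $g \colon i(E) \to P$ with $P \in \Ho(\Comm(Qi(E)))$, apply $i$ to $\phi$ to get $i(E) \to i(E')$, and --- using cofibrancy of $Qi(E)$ and the acyclic fibration $Qi(E') \to i(E')$ --- lift to a map $h \colon Qi(E) \to Qi(E')$ of cofibrant algebras which is compatible with the chosen cofibrant replacements. By the discussion preceding the proposition, push-forward along $h$ is a left Quillen functor $h_* \colon \Comm(Qi(E)) \to \Comm(Qi(E'))$, and I would set $P' := h_*(P) \in \Ho(\Comm(Qi(E')))$; via $Qi(E') \to i(E')$ this is an algebra receiving a map from $i(E')$, and the claim will be that $i(E') \to P'$ is a strong periodization of $E'$.

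To verify this I would pass to module categories. By \cite[Proposition 9.10]{spitzweck-thesis} the module-level push-forward along $h$ is a symmetric monoidal left Quillen functor, hence induces a strong symmetric monoidal, coproduct-preserving functor $b \colon \D(E)^\integers \to \D(E')^\integers$; and since forgetful functors commute with push-forwards, the underlying monoid of $P'$ in $\D(E')^\integers$ is $b$ applied to the underlying monoid of $P$. Now I would invoke the hypothesis that $g$ identifies $P$ with $P(E) = \bigsqcup_{i \in \integers} E \otimes K^{\otimes i}$ as a monoid under $i(E)$: the functor $b$ sends the free graded $E$-module $E \otimes K^{\otimes i}$ to the free graded $E'$-module $E' \otimes K^{\otimes i}$, commutes with the coproduct, and --- being strong symmetric monoidal --- carries the multiplication of $P(E)$, which on each bidegree is induced by $\mu_E$ and the identity of a power of $K$, to the analogous map built from $\mu_{E'}$, i.e. to the multiplication of $P(E')$; it likewise carries $g$ to the canonical map $i(E') \to P(E')$. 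Hence $b(P) \cong P(E')$ as monoids in $\D(E')^\integers$ under $i(E')$, which is exactly the assertion that $P'$ is a strong periodization of $E'$. One small preliminary point to record is that the twist hypothesis needed to form $P(E')$ holds: since $E' \otimes K^{\otimes 2} \cong E' \otimes_E (E \otimes K^{\otimes 2})$ and $\id_E \otimes \tau$ is already the identity on $E \otimes K^{\otimes 2}$, so is $\id_{E'} \otimes \tau$ on $E' \otimes K^{\otimes 2}$.

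The point requiring care --- rather than a genuine obstacle --- is the bookkeeping of compatibilities among these push-forward functors: that the algebra-level push-forward refines the module-level one, that the module-level push-forward is strong symmetric monoidal on the objects in question, and that all of this is compatible with the chosen cofibrant replacements and with the functor $i$. All of it is contained in \cite[Proposition 9.10]{spitzweck-thesis} and the surrounding discussion reproduced in the excerpt, so I expect only a careful unwinding of definitions and no essential difficulty.
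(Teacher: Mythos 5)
Your proof is correct and follows essentially the same route as the paper's one-line argument, which simply defines the periodization of $E'$ as $i(E') \otimes_{Qi(E)}^{\mathbf{L}} P$ --- this is your $h_*(P)$ up to the choice of cofibrant replacement. The verification that the base-changed object identifies with $P(E')$ as a monoid under $i(E')$, and the check of the twist hypothesis for $E'$, are details the paper leaves implicit, so your write-up is a faithful expansion rather than a different argument.
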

\begin{proof}
 Let $Qi(E) \to i(E)$ be a cofibrant replacement and $Qi(E) \to P$ a strong periodization.
Then $i(E') \to i(E') \otimes_{Qi(E)}^\bL P$ is a strong periodization of $E'$.
\end{proof}

\section{Symmetric spectra and bimorphisms} \label{symm-bimor}

Our main references for symmetric spectra are \cite{hovey.symmspec} and
\cite{schwede.book}. We let $\caC$ be a left proper cellular symmetric monoidal model
category and $K \in \caC$ a cofibrant object. We let $\caS$ be the category of
symmetric $K$-spectra with the stable model structure as defined in \cite{hovey.symmspec}.
Its underlying category is the category of right $\mathrm{Sym}(K)$-modules in symmetric
sequences in $\caC$. Since $\mathrm{Sym}(K)$ is commutative it has a tensor product denoted
$\wedge$. We also denote by $K$ the image of $K$ in $\caS$.

Recall from \cite[Def. 8.9]{hovey.symmspec} the shift functor $s_-$ with the property $(s_- X)_n=X_{1+n}$.
Contrary to what is said in loc. cit. the $\Sigma_n$-action on $X_{1+n}$ is via the monomorphism
$\Sigma_n \to \Sigma_{1+n}$ which is induced by the strictly monotone embedding
$\{1,\ldots,n\} \to \{1, \ldots, n+1\}$ omitting $1$ in the target.

Recall from \cite[I.3.]{schwede.book} that a map $X \wedge Y \to Z$ in $\caS$ is a {\em bimorphism}
from $(X,Y)$ to $Z$, where a bimorphism consists of $\Sigma_p \times \Sigma_q$-equivariant maps
$$X_p \otimes Y_q \to Z_{p+q}$$ such that natural diagrams commute.

We denote by $\chi_{p,q} \in \Sigma_{p+q}$ the block permutation, see \cite{schwede.book}.

As noted in \cite[I.3.]{schwede.book} there is a natural morphism
$$(s_- X) \wedge Y \to s_-(X \wedge Y).$$

More generally if we are given a morphism $X \wedge Y \to Z$
with components $\alpha_{p,q} \colon X_p \otimes Y_q \to Z_{p+q}$ we exhibit a natural morphism
\begin{equation} \label{shift-prod}
(s_-^r X) \wedge (s_-^s Y) \to s_-^{r+s} Z
\end{equation}
having components $$\xymatrix{X_{r+p} \otimes Y_{s+q} \ar[r]^{\alpha_{r+p,s+q}} & Z_{r+p+s+q}
\ar[r]^{1 \times \chi_{p,s} \times 1} & Z_{r+s+p+q}}. $$

Suppose now that we are in the following situation: Let $M,N,P,M',N',P'$ be spectra and
let maps $M \to s_-^r M'$, $N \to s_-^s N'$, $P \to s_-^{r+s} P'$, $M \wedge N \to P$ and
$M' \wedge N' \to P'$ be given. We say that these maps are {\em compatible} if the diagram
$$\xymatrix{M_p \otimes N_q \ar[r] \ar[d] & P_{p+q} \ar[r] & P'_{r+s+p+q} \\
M'_{r+p} \otimes N'_{s+q} \ar[r] & P'_{r+p+s+q} \ar[ur]_{1 \times \chi_{p,s} \times 1} & }$$
commutes. This is the same as saying that the diagram
$$\xymatrix{M \wedge N \ar[r] \ar[d] & P \ar[d] \\
(s_-^r M') \wedge (s_-^s N') \ar[r] & s_-^{r+s} P'}$$
commutes, where the bottom horizontal map is the one from (\ref{shift-prod}).

We denote by $R$ and $Q$ the functorial fibrant and cofibrant replacement functors in $\caS$.

Note that by \cite[Theorem 8.10]{hovey.symmspec} we have natural isomorphisms $K^r \wedge (QX) \cong s_-^r (RX)$
in $\Ho \caS$. For later reference we note that for the proof of this fact a natural map
\begin{equation} \label{shift-trans}
 X \to (s_-X)^K
\end{equation}
is used in loc. cit. Contrary to what is said in loc. cit. this map involves a non-trivial
block permutation.

From the situation above we thus get maps $M \to K^r \wedge M'$, $N \to K^s \wedge N'$ and $P \to K^{r+s} \wedge P'$
in $\Ho \caS$, where we use the derived smash product. We form the diagram
\begin{equation} \label{shift-smash}
\xymatrix{K^r \wedge M' \wedge K^s \wedge N' \ar[d]^\cong & M \wedge N \ar[l] \ar[dd] \\
K^r \wedge K^s \wedge M' \wedge N' \ar[d] & \\
K^{r+s} \wedge P' & P \ar[l] }
\end{equation}
in $\Ho \caS$.

\begin{lemma} \label{mult-correct}
 Let the situation be as above. Suppose the given maps are compatible.
Suppose further that the maps $s_-^r M' \to s_-^r(RM')$, $s_-^s N' \to s_-^s(RN')$ and
$s_-^{r+s} P' \to s_-^{r+s} (RP')$ are stable equivalences.
Then the diagram (\ref{shift-smash}) commutes in $\Ho \caS$.
\end{lemma}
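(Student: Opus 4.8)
The plan is to reduce the commutativity of (\ref{shift-smash}) in $\Ho\caS$ to the commutativity of the corresponding diagram of shifted spectra, which is essentially the second formulation of compatibility given just before the lemma. The point is that the isomorphisms $K^r \wedge (QX) \cong s_-^r(RX)$ of \cite[Theorem 8.10]{hovey.symmspec} are natural, so after applying them (\ref{shift-smash}) is translated into a diagram built entirely out of the maps $M \to s_-^r M'$, etc., the shift-product maps (\ref{shift-prod}), and the transition maps (\ref{shift-trans}); compatibility of the original maps then makes that translated diagram commute by definition. The main technical content — and the reason the lemma is phrased with the hypothesis that $s_-^r M' \to s_-^r(RM')$ and its companions are stable equivalences — is to make sure that no spurious block permutations are introduced when one passes back and forth between the ``$K^r \wedge -$'' picture and the ``$s_-^r$'' picture.

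Concretely, first I would replace $M$, $N$, $P$ by their cofibrant replacements $QM$, $QN$, $QP$ (this does not change anything in $\Ho\caS$) and $M'$, $N'$, $P'$ by their fibrant replacements, using the hypothesis that $s_-^r M' \to s_-^r(RM')$ etc.\ are stable equivalences to know that the given maps $M \to s_-^r M'$ still induce, after composition with $s_-^r M' \to s_-^r(RM')$ and inversion of the Hovey isomorphism, the maps $M \to K^r \wedge M'$ used in the statement. Second, I would rewrite the two composites around (\ref{shift-smash}) at the level of spectra in $\Ho\caS$: the left-then-bottom composite is $M \wedge N \to (s_-^r RM') \wedge (s_-^s RN') \to s_-^{r+s}(RM' \wedge RN') \to s_-^{r+s}(RP')$, using (\ref{shift-prod}) for the middle arrow and the fibrant replacement of $M' \wedge N' \to P'$ for the last, while the right-then-bottom composite is $M \wedge N \to P \to s_-^{r+s} RP'$. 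Third, I would invoke the explicit component description of (\ref{shift-prod}): on $M_p \otimes N_q$ the shift-product map is $\alpha_{r+p,s+q}$ followed by the block permutation $1 \times \chi_{p,s} \times 1$, which is exactly the shape appearing in the first (componentwise) formulation of compatibility. Hence on each $M_p \otimes N_q$ the two composites agree, and since these generate everything the two maps of spectra agree; translating back through the Hovey isomorphisms gives commutativity of (\ref{shift-smash}).

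The hard part will be bookkeeping the block permutations: one must check that the permutation introduced by (\ref{shift-trans}) (which, as the paper emphasizes, is \emph{not} trivial, contrary to \cite{hovey.symmspec}) and the permutation $\chi_{r,s}$ implicit in the left vertical isomorphism $K^r \wedge K^s \wedge M' \wedge N' \cong K^r \wedge M' \wedge K^s \wedge N'$ of (\ref{shift-smash}) combine to give precisely the $1 \times \chi_{p,s} \times 1$ occurring in (\ref{shift-prod}), with no leftover. This is where the ``Manipulation rules for coordinates'' of \cite[Remark I.1.12]{schwede.book} get used, and it is the only place a genuine computation is needed; everything else is naturality. I would organize this computation by fixing $p, q$ and tracking a general element of $M_p \otimes N_q$ through both paths, writing every permutation as an explicit element of the relevant symmetric group and using the cocycle/associativity identities for the $\chi_{a,b}$ to see that the two resulting permutations of $\{1,\dots,r+s+p+q\}$ coincide.
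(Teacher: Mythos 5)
Your proposal is correct and takes essentially the same approach as the paper's proof: both reduce commutativity of (\ref{shift-smash}) to the compatibility hypothesis by naturality of the Hovey isomorphism $K^r \wedge (QX) \cong s_-^r(RX)$, and the block-permutation bookkeeping you single out as the only genuine computation is exactly what the paper isolates as the verification of its square (\ref{tens-shift-square}) using the comparison map (\ref{tens-shift}). The paper then packages the (co)fibrant replacements you invoke into one large explicit commutative diagram, but the underlying argument is the same.
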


\begin{proof}
 We first note that for any symmetric spectrum $X$ there is a map of symmetric spectra
$$K \wedge X \to s_- X,$$ see \cite[Example I.2.18]{schwede.book}.
Iterating we get maps
\begin{equation} \label{tens-shift}
K^r \wedge X \to s_-^r X.
\end{equation}

The $n$-th component
is given by $$K^r \otimes X_n \cong X_n \otimes K^r \to X_{n+r} \overset{\chi_{n,r}}{\longrightarrow} X_{r+n},$$
where the twist map, the structure map of $X$ and the block permutation are used.

The identification $K^r \wedge X \cong s_-^r(RX)$ in $\Ho \caS$
is induced by the natural map $$K^r \wedge (QX) \to K^r \wedge X \to s_-^r X \to s_-^r(RX)$$
in $\caS$, since the transformations (\ref{tens-shift}), (\ref{shift-trans}) and the unit for the adjunction
$$K \wedge (\_) \leftrightarrow (\_)^K$$ are suitably compatible.

 We leave it to the reader to check that the square
\begin{equation} \label{tens-shift-square}
\xymatrix{K^r \wedge M' \wedge K^s \wedge N' \ar[r] \ar[d]^\cong & s_-^rM' \wedge s_-^sN'  \ar[dd] \\
K^r \wedge K^s \wedge M' \wedge N' \ar[d] & \\
K^{r+s} \wedge P' \ar[r] & s_-^{r+s} P'}
\end{equation}
commutes, where in the horizontal maps the maps (\ref{tens-shift}) are used and the right
vertical map is (\ref{shift-prod}).

We build the following diagram:

$$\tiny \xymatrix{& Q(K^r \wedge QM') \wedge Q(K^s \wedge QN') \ar[r] \ar[d]^\sim & Q(s_-^rM') \wedge Q(s_-^sN') \ar[d] & QM \wedge QN
\ar[d] \ar[l] \\
K^r \wedge K^s \wedge Q(QM' \wedge QN') \ar[r] \ar[d] & K^r \wedge QM' \wedge K^s \wedge QN' \ar[r] \ar[d] & s_-^rM' \wedge s_-^sN'  \ar[d] & M \wedge N \ar[d] \ar[l] \\
K^{r+s} \wedge QP' \ar[r] & K^{r+s} P' \ar[r] & s_-^{r+s} P' & P \ar[l] }.$$

The lower middle square commutes since (\ref{tens-shift-square}) commutes. All other
squares also commute. In the two top rows the left most horizontal maps are equivalences.
The composition of the left most maps in the last row also is an equivalence.
Thus viewing the diagram as a diagram in $\Ho \caS$ shows the claim.
\end{proof}

\section{Examples} \label{examples}

\subsection{Motivic cobordism}

In this section we are in the situation where
$l \colon \caS \to \caC$. The category $\caC$ will
be a model category modelling the stable motivic homotopy
category. In order that $\caC$ reveives a functor from
$\caS$ we have to use the following slight modification
of the usual versions for that category.
Let $S$ be a base scheme, Noetherian of finite
Krull dimension. We let $\mathrm{Sh}_S$
be the category of simplicial presheaves on 
$\Sm/S$, the category of smooth schemes over $S$,
endowed with a model structure that is Nisnevich and
$\bA^1$-local.
The category of symmetric $S^1_s$-spectra $\mathrm{Sp}_s(S)$,
$S^1_s$ the simplicial
circle, in $\mathrm{Sh}_S$ now receives a symmetric monoidal
left Quillen functor from $\caS$, and we let $\caC$ be
the category of symmetric $T$-spectra in $\mathrm{Sp}_s(S)$,
$T=\bA^1/(\bA^1 \setminus \{0\})$ the Tate object.
We leave it to the reader to verify that $\caC$ hits all of our
requirements.

To construct strong periodizations with $K$ the image of $T$ in $\caC$ we will nevertheless work
in the category of symmetric $T$-spectra in $\mathrm{Sh}_S$.
By transport of structure we will not loose anything.

Recall from \cite{PPR2} the strictly associative and commutative
model of the algebraic cobordism spectrum $\MGL$.
We will construct a strong periodization of it.

As in \cite{PPR2} we consider for any natural numbers $n,m$
the space $\bA^{nm} \cong \bA^m \times \cdots \times \bA^m$
($n$ factors) with the $\Sigma_n$-action coming from this
product decomposition. Instead of only considering $n$-planes
in this space we consider $k$-planes for all possible $k$.

We have to define a graded symmetric $T$-spectrum $\PMGL$.

We define the space $\PMGL_{r,n}$ in grade $r$ and spectrum level $n$:
if $r < -n$ we set $\PMGL_{r,n}= \mathrm{pt}$. Otherwise
set $\PMGL_{r,n}= \colim_m \Thom(\xi_{n+r,nm})$, where
$\xi_{n+r,nm}$ is the tautological vector bundle
over the Grassmannian $\Gr(n+r,nm)$ (the $\colim$ starts for such $m$ such that
$nm \ge n+r$). Having this definition the construction
works exactly as in \cite{PPR2}.

We have multiplication maps $\PMGL_{r_1,n_1} \wedge \PMGL_{r_2,n_2}
\to \PMGL_{r_1+r_2,n_1+n_2}$ which are
$\Sigma_{n_1} \times \Sigma_{n_2}$-equivariant, we have the units
$\mathrm{pt} \to \PMGL_{0,0}$ and $T \to \PMGL_{0,1}$.
This is the data we need to define a ring spectrum, see \cite[Def. I.1.3]{schwede.book}.
The structure maps of the individual spectra $\PMGL_r$ are
induced by the multiplication maps and the second unit.

\begin{theorem} \label{pmgl-per}
The graded spectrum $\PMGL$ is a strong periodization of
$\MGL$.
\end{theorem}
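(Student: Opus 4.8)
The plan is to verify that the graded symmetric $T$-spectrum $\PMGL$ really is, after passing to the homotopy category, the periodization $P(\MGL)$ in the sense of the definition; that is, we must (a) identify $\PMGL$ with $\MGL$ in grade $0$, (b) identify the underlying graded object of $\PMGL$ in $\D(\MGL)^\integers$ with $\bigsqcup_{i \in \integers} \MGL \otimes K^{\otimes i}$, and (c) show the multiplication on $\PMGL$ matches the twisted-sum multiplication of $P(\MGL)$. For (a), note that in grade $r = 0$ the space $\PMGL_{0,n} = \colim_m \Thom(\xi_{n,nm})$ is exactly the $n$-th space of the $\PPR2$ model of $\MGL$, with the same structure maps, so $\PMGL_0 \cong \MGL$ on the nose as commutative ring spectra. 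For (b), the key computation is that $\PMGL_r$, the spectrum in grade $r$, is a shift of $\MGL$: concretely, for $r \ge 0$ one has $\PMGL_{r,n} = \colim_m \Thom(\xi_{n+r,nm})$, which is the $(n+r)$-th space of $\MGL$, so $\PMGL_r$ is the $r$-fold "shift" $s_-^r$ applied to $\MGL$ (and for $r<0$ one gets a positive shift with the $\mathrm{pt}$'s in low levels accounting for the connectivity), and by \cite[Theorem 8.10]{hovey.symmspec} $s_-^r(R\MGL) \cong K^r \wedge Q\MGL$ in $\Ho\caS$ — i.e.\ $\PMGL_r \cong \MGL \otimes K^{\otimes r}$ in $\D(\MGL)$, as required. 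The twist hypothesis $\id_{\MGL} \otimes \tau = \id$ on $\MGL \otimes K^{\otimes 2}$ holds here because $\MGL$ is an $\MGL$-module and $T^2$ acts via a permutation that is a commutator of units (this is where the strict commutativity of the $\PPR2$ model is used).

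The heart of the argument is (c): one must show that under the identifications of (b) the multiplication maps $\PMGL_{r_1,n_1} \wedge \PMGL_{r_2,n_2} \to \PMGL_{r_1+r_2,n_1+n_2}$ induce, on $\D(\MGL)^\integers$, precisely the maps $(\MGL \otimes K^{\otimes r_1}) \otimes_{\MGL} (\MGL \otimes K^{\otimes r_2}) \cong \MGL \otimes K^{\otimes(r_1+r_2)}$ coming from the ring structure. This is exactly the bookkeeping that Lemma \ref{mult-correct} was designed for: one sets $M = \PMGL_{r_1}$, $N = \PMGL_{r_2}$, $P = \PMGL_{r_1+r_2}$, $M' = N' = P' = \MGL$, with the shift maps $M \to s_-^{r_1}M'$ etc.\ being the evident identifications, and the component description of the multiplication on $\PMGL$ (the Thom-space maps of Grassmannians, with the block permutation $1 \times \chi_{p,s} \times 1$ appearing from reindexing the Grassmannians $\Gr(n_1{+}r_1{+}p, \ldots)$ and $\Gr(n_2{+}r_2{+}q,\ldots)$) is precisely the compatibility condition of Section \ref{symm-bimor}. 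Having checked compatibility, Lemma \ref{mult-correct} tells us the square (\ref{shift-smash}) commutes in $\Ho\caS$, which identifies the multiplication on $\PMGL$ with the derived multiplication $\MGL \otimes K^{\otimes r_1} \wedge \MGL \otimes K^{\otimes r_2} \to \MGL \otimes K^{\otimes(r_1+r_2)}$; summing over $r_1, r_2$ gives exactly $P(\MGL)$.

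Finally, one has to descend from $\caS$-level statements to the motivic category $\caC$: apply the left Quillen functor $l \colon \caS \to \caC$ prolonged to graded objects, observe that $l$ sends the cofibrant symmetric-spectrum model of $\PMGL$ to a cofibrant commutative algebra in $\caC^\integers$ (using that $l$ is symmetric monoidal), so $l^\integers(\PMGL) \in \Ho(\Comm(\caC^\integers))$ maps to $i(\MGL)$ in grade $0$ and realizes $P(\MGL)$ in $\D(\MGL)^\integers$; a cofibrant replacement of $i(\MGL)$ mapping to $l^\integers(\PMGL)$ is then the sought strong periodization. (One should note, as in \cite{PPR2}, that the transition from $\mathrm{Sh}_S$ to the various spectrum categories and the final passage to $\caC$ costs nothing by transport of structure, as already remarked before the theorem.) I expect the main obstacle to be step (c) — specifically, matching the block permutations $\chi_{p,q}$ arising from the Grassmannian reindexing in the $\PMGL$ multiplication with the permutations hidden in Hovey's isomorphism $K^r \wedge QX \cong s_-^r(RX)$ and in the map (\ref{shift-trans}); this is purely combinatorial but error-prone, and it is exactly the point where the warnings in Section \ref{symm-bimor} about non-trivial block permutations (contrary to the statements in the cited references) become essential.
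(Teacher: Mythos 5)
Your proposal follows the paper's strategy very closely: identify $\PMGL_0$ with the PPR2 model of $\MGL$, identify the individual graded pieces with shifts of $\MGL$ via \cite[Theorem 8.10]{hovey.symmspec}, and use Lemma \ref{mult-correct} to check the multiplication. There are, however, three concrete gaps worth naming.

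First, Lemma \ref{mult-correct} has as hypothesis that the maps $s_-^r M' \to s_-^r(RM')$ etc.\ are stable equivalences. The paper secures this by invoking \emph{semistability} of the $\PMGL_r$ (in the motivic sense of \cite[Proposition 3.2]{rso}, in analogy with \cite[Th.\ 4.44]{schwede.book}), which is precisely what guarantees that shift commutes with fibrant replacement up to stable equivalence. Your write-up never verifies or even mentions this hypothesis, so as written the application of the lemma is not justified.

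Second, the assertion that $\PMGL_{r,n}=\colim_m \Thom(\xi_{n+r,nm})$ ``is the $(n+r)$-th space of $\MGL$'' and that one has $\PMGL_r = s_-^r \MGL$ on the nose is too strong. The $(n+r)$-th space of the PPR2 model is $\colim_m \Thom(\xi_{n+r,(n+r)m})$: the ambient affine spaces are indexed differently, and the $\Sigma_n$-actions arise from different product decompositions. The paper instead constructs explicit maps $\PMGL_{r,n} \to \PMGL_{r',s+n}$ induced by the Grassmannian inclusions $\Gr(n+r,nm) \to \Gr(n+r,(s+n)m)$, checks these are level equivalences, and only then applies Hovey's theorem. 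You need these maps both to get the isomorphism in $\Ho\caS$ and to have honest shift maps $M \to s_-^r M'$ feeding into Lemma \ref{mult-correct} (your ``evident identifications'' are not identities).

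Third, by fixing $M'=N'=P'=\MGL$ you only cover the pairs of non-negative grades directly; the paper's application with arbitrary $m'\le m$, $n'\le n$ (comparing $\PMGL_m,\PMGL_n,\PMGL_{m+n}$ against $\PMGL_{m'},\PMGL_{n'},\PMGL_{m'+n'}$) is what lets one bootstrap down to $(0,0)$ and back up to cover negative and mixed-sign grades. Your parenthetical remark about negative $r$ and ``$\mathrm{pt}$'s accounting for connectivity'' gestures at this but does not actually resolve the multiplication on the negatively-graded part, which is required by the definition of a strong (as opposed to semi) periodization.
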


\begin{proof}
We first show that the individual spectra $\PMGL_r$ have the
correct homotopy type.

We note that the spectra $\PMGL_r$ are {\em semistable} in a motivically analogous
sense as in \cite[Th. 4.44]{schwede.book} by \cite[Proposition 3.2]{rso}. In particular the maps
$s_-^r \PMGL_k \to s_-^r(R \PMGL_k)$, $R$ a fibrant replacement functor
and $s_-$ the shift functor, see section \ref{symm-bimor}, are stable equivalences.

Let $r' \le r$ and $s = r - r'$. We define maps of spectra $\PMGL_r \to s_-^s \PMGL_{r'}$ as
follows.

There are maps $\Gr(n+r,nm) \to \Gr(n+r,(s+n)m)$ induced by the inclusion $\bA^{nm} \hookrightarrow \bA^{(s+n)m}$.
Those are covered by maps of the universal vector bundles inducing maps of Thom spaces.
Taking the colimit $m \to \infty$ we get maps $\PMGL_{r,n} \to \PMGL_{r',s+n}$ which are weak equivalences.
It is easily seen that these maps assemble to a map of spectra $\PMGL_r \to s_-^s \PMGL_{r'}$
which is a level equivalence. Thus by \cite[Theorem 8.10]{hovey.symmspec} we get an isomorphism
$\PMGL_r \cong K^s \wedge \PMGL_{r'}$ in $\Ho \caS$.
This shows that the $\PMGL_r$ have the correct homotopy type.

To show that the multiplication is the correct one we use lemma (\ref{mult-correct}) as follows:
let $m' \le m$, $n' \le n$, $p'=m' + n'$, $p = m+n$, $r=m-m'$, $ s= n-n'$,
$M=\PMGL_m$, $N=\PMGL_n$, $P=\PMGL_{m+n}$, $M'=\PMGL_{m'}$, $N'= \PMGL_{n'}$, $P'=\PMGL_{m' + n'}$,
$M \wedge N \to P$, $M' \wedge N' \to P'$ the multiplication maps, $M \to s_-^rM'$, $N \to s_-^sN'$, $P \to s_-^{r+s}P'$
the maps defined above. Then it is easily checked that these maps are compatible in the sense
of section \ref{symm-bimor}. Lemma (\ref{mult-correct}) now shows that the multiplication is the
correct one.
\end{proof}

Recall that the cellular objects $\D(\MGL)_\caT \subset \D(\MGL)$ comprise the full localizing
triangulated subcategory spanned by the Tate spheres $\MGL \wedge K^i$,
$i \in \integers$.

\begin{corollary} \label{mgl-rep}
There is a graded $E_\infty$-ring spectrum $A$ such
that $\D(A)$ is equivalent as tensor triangulated category
to $\D(\MGL)_\caT$.
\end{corollary}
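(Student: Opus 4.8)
The plan is to combine Theorem~(\ref{pmgl-per}) with Theorem~(\ref{rep-th}), which are precisely tailored for this purpose. By Theorem~(\ref{pmgl-per}) the graded symmetric $T$-spectrum $\PMGL$ is a strong periodization of $\MGL$, i.e.\ in the notation of section~\ref{per-alg} we have a strong periodization $g \colon i(\MGL) \to \PMGL$, where now $\caC$ is the model category of symmetric $T$-spectra in $\mathrm{Sh}_S$ (or in $\mathrm{Sp}_s(S)$, after transport of structure), $\caS$ is symmetric spectra in simplicial sets with the left Quillen functor $l \colon \caS \to \caC$ constructed above, and $K$ is the image of $T$ in $\caC$.

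First I would replace $g$ by a cofibration between cofibrant objects with fibrant target, as allowed in the hypothesis of Theorem~(\ref{rep-th}); this is harmless since the statement of the corollary only concerns objects of homotopy categories. Then I apply Theorem~(\ref{rep-th}) directly: it produces the graded $E_\infty$-ring spectrum $A := r^\integers(P)$, where $r^\integers$ is the right adjoint of $l^\integers \colon \caS^\integers \to \caC^\integers$ and $P$ is the chosen fibrant model of $\PMGL$. The theorem asserts that $\D(A) = \D(r^\integers(P))$ is canonically equivalent, as a tensor triangulated category, to the localizing full triangulated subcategory of $\D(\MGL)$ spanned by the spheres $\MGL \otimes K^i$, $i \in \integers$. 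By the recalled description of the cellular objects, this subcategory is exactly $\D(\MGL)_\caT$. Hence $\D(A)$ is equivalent as a tensor triangulated category to $\D(\MGL)_\caT$, which is the assertion.

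There is essentially no obstacle here: the two theorems have been set up so that the corollary is an immediate concatenation, the only minor point being to note that the "image of $i(\MGL)$'' language in the definition of strong periodization matches the input required by Theorem~(\ref{rep-th}), and that $A$ is indeed a graded $E_\infty$-ring spectrum because $r^\integers$ is lax symmetric monoidal and $P$ is a commutative algebra. Accordingly the proof is just:

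\begin{proof}
By Theorem~(\ref{pmgl-per}) the graded spectrum $\PMGL$ is a strong periodization of $\MGL$, so there is a strong periodization $i(\MGL) \to \PMGL$. Applying Theorem~(\ref{rep-th}) with $E = \MGL$ and $P$ a fibrant model of $\PMGL$ we obtain the graded $E_\infty$-ring spectrum $A = r^\integers(P)$ together with an equivalence of tensor triangulated categories between $\D(A)$ and the localizing full triangulated subcategory of $\D(\MGL)$ spanned by the $\MGL \wedge K^i$, $i \in \integers$. By the description recalled above this subcategory is $\D(\MGL)_\caT$, which proves the claim.
\end{proof}
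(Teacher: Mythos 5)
Your proposal is correct and matches the paper's own proof, which likewise just cites Theorem~(\ref{pmgl-per}) together with Theorem~(\ref{rep-th}); you have simply spelled out the intermediate bookkeeping (passing to a cofibrant/fibrant model, identifying $A = r^\integers(P)$, matching the image subcategory with $\D(\MGL)_\caT$) that the paper leaves implicit.
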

\begin{proof}
 This follows from theorem (\ref{pmgl-per})
and theorem (\ref{rep-th}).
\end{proof}

For any motivic spectrum $X$ we denote the slices by $s_i(X)$, see \cite{voe-slice}. Since $\MGL$ is
effective (\cite{spitzweck-rel}) there is a map of ring spectra in the stable motivic homotopy category
$\MGL \to s_0 \MGL$. As noticed in \cite[Remark 7.2]{spitzweck-slice} this map can be realized as
a map of motivic $E_\infty$-ring spectra when our categorical assumptions hold. Moreover by
\cite{levine-htp} (and \cite{voevodsky-zero-slice} for fields of characteristic $0$)
and \cite[Cor. 3.3]{spitzweck-rel} this map is the map from $\MGL$ to the motivic Eilenberg MacLane spectrum $\MZ$ when
$S$ is the spectrum of a perfect field. 

Let $\D(\MZ)_\caT \subset \D(\MZ)$ be the full localizing
triangulated subcategory spanned by the Tate spheres $\MZ \wedge K^i$,
$i \in \integers$.

\begin{corollary} \label{mz-rep}
 Suppose $S$ is the spectrum of a perfect field and our categorical assumptions hold.
Then $\MZ$ is strongly periodizable. In particular there is a graded $E_\infty$-ring spectrum $A$ such
that $\D(A)$ is equivalent as tensor triangulated category
to $\D(\MZ)_\caT$.
\end{corollary}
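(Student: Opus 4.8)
The plan is to deduce the statement from the strong periodizability of $\MGL$ (Theorem \ref{pmgl-per}), the inheritance property of Proposition \ref{alg-periodizable}, and the abstract representation theorem \ref{rep-th}, once we know that the zero-slice map $\MGL \to s_0\MGL$ can be realized inside $\Comm(\caC)$ and that its target is $\MZ$ over a perfect field.

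First I would invoke the discussion preceding the corollary. Since $\MGL$ is effective there is a ring map $\MGL \to s_0\MGL$ in the stable motivic homotopy category; by \cite[Remark 7.2]{spitzweck-slice} our categorical assumptions let us promote it to a map in $\Comm(\caC)$, and by \cite{levine-htp}, \cite{voevodsky-zero-slice} and \cite[Cor. 3.3]{spitzweck-rel} its target is (equivalent to) $\MZ$ when $S$ is the spectrum of a perfect field. Passing to cofibrant models as needed, we thus obtain a map $\MGL \to \MZ$ in $\Comm(\caC)$.

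Next, Theorem \ref{pmgl-per} tells us $\MGL$ is strongly periodizable, witnessed by $\PMGL$. Applying Proposition \ref{alg-periodizable} to the map $\MGL \to \MZ$ then yields that $\MZ$ is strongly periodizable; concretely, if $Qi(\MGL) \to i(\MGL)$ is a cofibrant replacement with strong periodization $Qi(\MGL) \to \PMGL$, then $i(\MZ) \to i(\MZ) \otimes^\bL_{Qi(\MGL)} \PMGL$ is a strong periodization of $\MZ$, and in particular the twist hypothesis on $\MZ \otimes K^{\otimes 2}$ is inherited from that on $\MGL \otimes K^{\otimes 2}$. Call this strong periodization $g \colon i(\MZ) \to P$, arranged (by factoring) to be a cofibration with $P$ fibrant.

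Finally I would feed $g$ into Theorem \ref{rep-th} with $E = \MZ$ (replaced by a cofibrant model) and $K$ the image of the Tate object $T$: it gives that $\D(r^\integers(P))$ is equivalent, as a tensor triangulated category, to the localizing full triangulated subcategory of $\D(\MZ)$ spanned by the $\MZ \otimes K^i$, $i \in \integers$, which is by definition $\D(\MZ)_\caT$. Setting $A := r^\integers(P)$, a graded $E_\infty$-ring spectrum in symmetric spectra of simplicial sets, completes the argument. I expect the only genuine input here — and thus the crux rather than an obstacle — to be the realization of the zero-slice map at the $E_\infty$-level over a perfect field, which is exactly where the categorical assumptions enter; the remainder is a formal concatenation of the earlier results together with routine cofibrant/fibrant replacement bookkeeping.
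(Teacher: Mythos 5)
Your proposal is correct and follows essentially the same route as the paper: combine the effectivity of $\MGL$ and the $E_\infty$-level realization of $\MGL \to s_0\MGL \simeq \MZ$ over a perfect field (the ``remarks above'' the corollary) with Theorem~\ref{pmgl-per}, Proposition~\ref{alg-periodizable}, and Theorem~\ref{rep-th}. You have merely spelled out the bookkeeping (the explicit base-change construction from Proposition~\ref{alg-periodizable}, the inheritance of the twist hypothesis, and the choice $A = r^{\integers}(P)$) that the paper leaves implicit in its one-line proof.
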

\begin{proof}
 By the remarks above this follows from proposition (\ref{alg-periodizable}), theorem (\ref{pmgl-per})
and theorem (\ref{rep-th}).
\end{proof}

\begin{remark}
 Here we suppose our categorical assumptions hold. Then by the above discussion $s_0 \MGL$ is strongly periodizable,
in particular $(s_0\MGL)_\bQ$ is strongly periodizable. Thus if $S$ is regular \cite[cor. 6.4]{spitzweck-rel} states
that the Landweber theory $\LQ \cong (s_0\MGL)_\bQ$ has a strongly periodizable $E_\infty$-structure.
Since these spectra are rational it follows from our representation theorem
(\ref{rep-th}) that there is a graded rational cdga $A$ such that
$\D(A) \simeq \D(\LQ)_\caT$, the latter category being defined similarly as above.
We note that $\D(\LQ)_\caT$ is a good model for rational Tate motives
over any regular base.
\end{remark}

\begin{remark}
 There is a map of motivic $E_\infty$-ring spectra $\MGL \to \KGL$, see \cite[Prop. 5.10]{gepner-snaith}.
It thus follows from proposition (\ref{alg-periodizable}) and theorem (\ref{pmgl-per}) that
$\KGL$ is strongly periodizable.
\end{remark}

\begin{remark}
 Let $A$ be the graded topological spectrum of corollary (\ref{mgl-rep}) and let a complex
point of $S$ be given. Then topological realization provides us with a map of
graded $E_\infty$-ring spectra $\varphi \colon A \to \mathsf{PMU}$.
The topological realization functor $\D(A) \simeq \D(\MGL)_\caT \to \D(\MU)$ can be modelled by push forward along $\varphi$
and taking the zeroth component of the resulting graded $\mathsf{PMU}$-module.
\end{remark}

\subsection{Motivic cohomology}

In this section we will be in the situation where
$l \colon \caA \to \caC$. Let $k$ be a field. We will assume that $k$
is of characteristic $0$.
We first explain what $\caC$ is. We let $\shvnis(\smcor(k))$
be the category of Nisnevich sheaves with transfers on the
category of smooth schemes over $k$, see \cite{voevodsky.triangulated}. The category of complexes $\Cpx(\shvnis(\smcor(k)))$
has an $\bA^1$- and Nisnevich local symmetric monoidal model structure such
that the canonical functor from $\caA$ is symmetric monoidal left Quillen and
such that $\bbT:=S^0 \ztr(\PP^1,\{\infty\})$ is cofibrant ($S^0 X$ denotes the complex where $X$ sits in
degree $0$).
The category $\caC$ is defined to be the category of symmetric $\bbT$-spectra in $\Cpx(\shvnis(\smcor(k)))$
with the stable model structure defined in \cite{hovey.symmspec}.
The object $K$ is defined to be the image of $\bbT$ in $\caC$.

\begin{theorem} \label{mot-per}
Suppose our cartegorical assumptions hold.
Then the unit sphere in $\caC$ has a strong periodization.
\end{theorem}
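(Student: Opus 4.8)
The strategy, as announced in the introduction, is to produce a \emph{semi periodization} of the unit sphere $\unit_\caC$ by explicit cycle‑theoretic means and then invoke proposition~(\ref{semi-per}) together with our categorical assumptions. Recall that $\caC$ is symmetric $\bbT$‑spectra in $\Cpx(\shvnis(\smcor(k)))$ and $K$ is the image of $\bbT = S^0\ztr(\PP^1,\{\infty\})$; modulo the stable equivalence $\ztr(\PP^1,\{\infty\}) \simeq \integers_{\mathit{tr}}(1)[2]$, the object $\unit_\caC \otimes K^{\otimes i}$ computes motivic cohomology $\integers(i)[2i]$. So what must be built is a commutative monoid in $\Ho(\Comm(Q i_-(\unit_\caC)))$ whose image in $\D(\unit_\caC)^{\integers_{\le 0}}$ is $\bigoplus_{i \le 0} \integers(i)[2i]$ with the evident cup‑product multiplication. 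The natural candidate is a Bloch‑type (or Suslin–Voevodsky‑type) cycle complex equipped with its intersection/concatenation product, organized into a negatively graded strictly commutative object; this is the integral analogue of the rational constructions of \cite{spitzweck-nistech}, \cite{spitzweck-mot} summarized in \cite[II.5.5.4]{levine.survey-mixed-motives}. Concretely I would take, in each grade $-i \le 0$, a functorial complex of Nisnevich sheaves with transfers quasi‑isomorphic to $\integers(i)[2i]$ — for instance built from equidimensional cycles $\zequi$ on $\PP^i$‑type configurations or from the cubical/simplicial Suslin complex of $\mathbf{G}_m^{\wedge i}$ — chosen so that the external product of cycles gives a \emph{strictly} associative, commutative and unital pairing (graded‑commutativity of cup product on even Tate twists is strict, with no sign, which is exactly the point of restricting to the $\integers(i)[2i]$‑diagonal and why the twist hypothesis $\id\otimes\tau = \id$ is automatic here).

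First I would fix the model: choose the cycle complexes $\caZ(i)$ so that (a) $\caZ(0) = \integers$, (b) each $\caZ(i)$ is a complex of Nisnevich sheaves with transfers, cofibrant (or easily made so) in $\Cpx(\shvnis(\smcor(k)))$, (c) there are quasi‑isomorphisms $\caZ(i)[2i] \simeq \integers(i)[2i]$ compatible with the products, and (d) the collection $\{\caZ(i)[2i]\}_{i\le 0}$ carries strictly commutative multiplication maps $\caZ(i)[2i]\otimes\caZ(j)[2j]\to\caZ(i+j)[2(i+j)]$. This is the heart of the construction and where the work of \cite{spitzweck-nistech},\cite{spitzweck-mot} is used, now without inverting primes. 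Next I would promote this graded object to a symmetric $\bbT$‑spectrum object $P$ in $\caC^{\integers_{\le 0}}$: the structure maps and the spectrum‑level bonding maps are supplied by multiplication with the distinguished cycle class in $\caZ(-1)[-2]\simeq \integers(-1)[-2]$... more precisely, one arranges the grading so that tensoring up a grade corresponds to smashing with $K$, and the bonding maps are the product maps. One checks this assembles, exactly as in the $\MGL$ discussion of section~\ref{examples} via lemma~(\ref{mult-correct}), into a genuine ring spectrum in the sense of \cite[Def. I.1.3]{schwede.book}, hence (being strictly commutative by construction) an object of $\Comm(\caC^{\integers_{\le 0}})$. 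The unit $\unit_\caC \to \caZ(0)$ in grade $0$ and the structure maps then give a map $i_-(\unit_\caC) \to P$ (after cofibrant replacement $Q i_-(\unit_\caC)$), and by construction its image in $\D(\unit_\caC)^{\integers_{\le 0}}$ is $P_-(\unit_\caC) = \bigoplus_{i\le 0}\unit_\caC\otimes K^{\otimes i}$ with the correct algebra structure — i.e. a semi periodization.

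Finally, having exhibited a semi periodization of $\unit_\caC$, proposition~(\ref{semi-per}) immediately yields — under our categorical assumptions — that $\unit_\caC$ is strongly periodizable, which is the assertion of the theorem. The main obstacle is unquestionably step one: constructing integral cycle complexes $\caZ(i)$ that simultaneously (i) compute the correct motivic cohomology sheaves, (ii) are rigid/functorial enough to live in $\Cpx(\shvnis(\smcor(k)))$ with cofibrancy, and (iii) support an \emph{on‑the‑nose} (not merely $E_\infty$‑up‑to‑homotopy) commutative, associative, unital product along the whole negative grading. In the rational setting one can cheat with differential‑graded or operadic rectification, but integrally one must be genuinely careful that the intersection product of cycles is strictly commutative in the relevant degrees; the restriction to the $(2i,i)$‑diagonal (even Tate twists) is precisely what makes this feasible, since there the Koszul sign vanishes and the geometric external product of cycles is literally symmetric. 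Everything after that — the assembly into a $\bbT$‑spectrum, the identification of the homotopy type via \cite[Theorem 8.10]{hovey.symmspec} and lemma~(\ref{mult-correct}) as in the $\PMGL$ argument, and the invocation of proposition~(\ref{semi-per}) — is routine by the methods already set up in this paper.
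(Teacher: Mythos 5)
Your high-level strategy matches the paper: exhibit a semi periodization $P \in \Comm(\caC^{\integers_{\le 0}})$ with $P \simeq P_-(\unit)$, verify the multiplication via lemma~(\ref{mult-correct}), and then invoke proposition~(\ref{semi-per}). But you have correctly identified, and then left unresolved, the only non-formal step: producing the strictly commutative graded spectrum. Your proposal describes a wish list of properties for complexes $\caZ(i)$ (cofibrant, quasi-isomorphic to $\integers(i)[2i]$, with an on-the-nose commutative associative unital product) and hopes to ``promote'' these to a symmetric $\bbT$-spectrum by bonding maps given by multiplication with a class in ``$\caZ(-1)[-2]$''. This is not a construction, and the passage from a graded chain complex to a symmetric spectrum conceals precisely the place where the strict commutativity must be manufactured: one must say what the $\Sigma_n$-actions on the spectrum levels are and why they are compatible with the products. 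The off-hand remark that the Koszul sign ``vanishes on the $(2i,i)$-diagonal'' is a statement about the homotopy category (it is the hypothesis $\id \otimes \tau = \id$); it does not by itself produce a strict model.

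The paper's actual solution is different in shape and resolves exactly this issue. Rather than fixing a complex in each grade and then stabilizing, one defines the grade-$(-r)$, level-$n$ term to be the single sheaf $\sP_{-r,n}=S^0\zequi(\bA^n,r)$, with $\Sigma_n$ acting through its geometric action on $\bA^n = \bA^1 \times \cdots \times \bA^1$. External product of equidimensional cycles is strictly associative, commutative and $\Sigma_{n_1}\times\Sigma_{n_2}$-equivariant on the nose, the units are $\integers \to \sP_{0,0}$ and $\bbT \to \sP_{0,1}=S^0\zequi(\bA^1,0)$, and the correct homotopy type only emerges after Nisnevich/$\bA^1$-localization, identified via flat pullback maps $j_n\colon\zequi(\bA^n,0)\to\zequi(\bA^{r+n},r)$ giving a level equivalence $\sP_0 \to s_-^r\sP_{-r}$, together with the Friedlander--Voevodsky comparison results (Nisnevich versus $cdh$ hypercohomology of pretheories, and the computation of bivariant cycle cohomology of affine space). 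Your sketch neither specifies the $\Sigma_n$-actions nor invokes these comparison theorems, and the alternative constructions you gesture at ($\PP^i$-configurations, cubical Suslin complexes of $\G^{\wedge i}$) would re-introduce the strictness and equivariance problems you are trying to avoid. So the missing ingredient is precisely the heart of the theorem: you need the concrete spectrum $\sP_{-r,n} = S^0\zequi(\bA^n,r)$, or something with the same two features (geometric $\Sigma_n$-action at the spectrum level, and strictly symmetric cycle product), before the rest of your argument — which is correct — can run.
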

\begin{proof}
 For any $X,U \in \Sm/k$ we let $\zequi(X,r)(U)$ the free abelian group generated by closed integral
subschemes of $X \times_k U$ which are equidimensional of relative dimension $r$ over $U$,
see \cite{friedlander-voevodsky.bivariant}. The assignment $U \mapsto \zequi(X,r)(U)$ has
the structure of a Nisnevich sheaf with transfers on $\Sm/k$. Note that we have natural bilinear
maps $\zequi(X,r)(U) \times \zequi(Y,s)(U) \to \zequi(X \times_k Y, r+s)(U)$ which are functorial
for finite correspondences. Thus we get maps $$\zequi(X,r) \otimes \zequi(Y,s) \to \zequi(X \times_k Y,r+s),$$
where the tensor product in $\shvnis(\smcor(k))$ is used.

We are going to define a ring object $\sP$ in $\caC^{\integers_{\le 0}}$. For non-negative
integers $r$ we let the spectra $\sP_{-r}$ be given by $\sP_{-r,n}=S^0 \zequi(\bA^n,r)$ with the obvious action
of $\Sigma_n$ and with
multiplication maps $$\sP_{-r_1,n_1} \otimes \sP_{-r_2,n_2} \to \sP_{-r_1-r_2,n_1+n_2}$$ given
by the above multiplication of cycles. These are $\Sigma_{n_1} \times \Sigma_{n_2}$-equivariant.
The two units $\integers \to \sP_{0,0}$ and $\bbT=S^0(\zequi(\PP^1,0)/\zequi(\{\infty\},0)) \to \sP_{0,1}=S^0 \zequi(\bA^1,0)$
are the natural ones. One checks easily that we get a commutative monoid in $\caC^{\integers_{\le 0}}$.
Moreover the unit map is an equivalence in degree $0$ by \cite[Prop. 4.1.5]{voevodsky.triangulated}.

We claim that $\sP$ is a semi periodization of the unit.
We first show that the individual spectra $\sP_{-r}$ have the correct homotopy type.

We claim $s_-^r \sP_{-r} \simeq \sP_0$,
where $s_-$ is the shift functor, see section \ref{symm-bimor}. Indeed, flat pullback of cycles along the projections
$\bA_k^{r+n} \cong \bA_k^r \times_k \bA_k^n \to \bA_k^n$
gives maps
$$j_n \colon \zequi(\bA^n,0) \to \zequi(\bA^{r+n},r).$$
We claim that these maps assemble to a map of spectra $\sP_0 \to s_-^r \sP_{-r}$.
We have to show that the $j_n$ are compatible with the structure maps
$$S^0 \zequi(\bA^n,0) \otimes \bbT \to S^0 \zequi(\bA^{n+1},0)$$ and
$$S^0 \zequi(\bA^{r+n},r) \otimes \bbT \to S^0 \zequi(\bA^{r+n+1},r).$$
This follows since the structure maps are given by mutliplication of cycles from the right
and we use flat pullback on the left.

We claim the map $j \colon \sP_0 \to s_-^r \sP_{-r}$ is a level equivalence.
Since $\sP_0$ is an $\Omega$-spectrum it follows then that $\sP_{-r}$ is also an $\Omega$-spectrum
and $s_-^r \sP_{-r} \simeq (Rs_-)^r \sP_{-r}$. It follows from \cite[Theorem 8.10]{hovey.symmspec}
that $\sP_{-r} \wedge^\bL K^r \simeq \sP_0$. Thus $\sP_{-r}$ will have the correct homotopy type.

We prove that the $S^0 j_n$ are equivalences. First note that by
\cite[Prop. 5.7 2.]{friedlander-voevodsky.bivariant} the presheaves
$\zequi(X,r)$ are pretheories in the sense of 
\cite[sec. 5]{friedlander-voevodsky.bivariant}.
For any presheaf $F$ on $\Sm/k$ with values in abelian groups denote by $\underline{C}_* F$
the complex associated to the simplicial presheaf
$U \mapsto F(\Delta^\bullet \times U)$.

The proof of \cite[Prop. 5.5 1.]{friedlander-voevodsky.bivariant}
shows that for any pretheory $F$ and $U \in \Sm/k$ we have isomorphisms
\begin{equation} \label{Nis-cdh}
\bH^i_{\mathit{Nis}}(U, (\underline{C}_* F)_{\mathit{Nis}})
\cong \bH^i_{\mathit{cdh}}(U, (\underline{C}_* F)_{\mathit{cdh}}).
\end{equation}
Now to show that the $S^0j_n$ are equivalences it is sufficient
to show that the $\underline{C}_* j_n$ are Nisnevich-local equivalences.

This follows from (\ref{Nis-cdh}),
\cite[Prop. 8.3 1.]{friedlander-voevodsky.bivariant}
and the definition of the bivariant cycle cohomology
\cite[Def. 4.3]{friedlander-voevodsky.bivariant}.

To show that $\sP$ is a semi periodization we are left to show that
the multiplication is the correct one. We apply lemma (\ref{mult-correct}) with
$M=N=P=\sP_0$, $M'=\sP_{-r}$, $N'=\sP_{-s}$, $P'=\sP_{-r-s}$. The maps
$M \to s_-^r M'$, $N \to s_-^s N'$, $P \to s_-^{r+s} P'$ are the maps $j$ constructed above.
The maps $M \wedge N \to P$ and $M' \wedge N' \to P'$ are the multiplication maps.
By inspection these maps are compatible in the sense of paragraph \ref{symm-bimor}.
Moreover the maps $s_-^r M' \to s_-^r(RM')$, $s_-^s N' \to s_-^s(RN')$ and
$s_-^{r+s} P' \to s_-^{r+s} (RP')$ are stable equivalences since all appearing spectra are $\Omega$-spectra.
Now lemma (\ref{mult-correct}) indeed says that the multiplication is the correct one.

Having constructed a semi periodization it follows from proposition (\ref{semi-per}) that under
the categorical assumptions the unit in $\caC$ has a periodization.
\end{proof}

We let $\DM(k):= \Ho \caC$ and $\DM(k)_\caT$ be the full localizing triangulated
subcategory of $\DM(k)$ generated by the $\integers(i)$ where $\integers(i)=K^i$.

\begin{corollary} \label{mot-rep}
 Suppose our categorical assumptions hold. Then there is an $E_\infty$-algebra
$A$ in $\caA^\integers$ such that $\D(A)$ is equivalent as tensor triangulated category to $\DM(k)_\caT$.
\end{corollary}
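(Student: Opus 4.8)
The plan is to combine Theorem (\ref{mot-per}) with the abstract representation Theorem (\ref{rep-th}), exactly as in the proofs of Corollaries (\ref{mgl-rep}) and (\ref{mz-rep}). First, I would invoke Theorem (\ref{mot-per}): under our categorical assumptions, the unit sphere $\unit \in \Comm(\caC)$ admits a strong periodization $g \colon i(\unit) \to P$ with $P \in \Ho(\Comm(Qi(\unit)))$. Here $\caC$ is the category of symmetric $\bbT$-spectra in $\Cpx(\shvnis(\smcor(k)))$ and $K$ is the image of $\bbT$, so that in $\DM(k) = \Ho\caC$ the $\otimes$-invertible object $K^i$ is precisely the Tate twist $\integers(i)$, and the localizing subcategory spanned by the spheres $\unit \otimes K^i$ is by definition $\DM(k)_\caT$.

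Next I would feed this strong periodization into Theorem (\ref{rep-th}), applied to $E = \unit$ (after cofibrant replacement, which does not change the homotopy category of modules, and $\D(\unit) = \Ho\caC = \DM(k)$ since modules over the unit are just objects of $\caC$). The theorem is stated for a left Quillen functor $l \colon \caS \to \caC$ or $l \colon \caA \to \caC$; here we are in the second case, so $r^\integers$ is the right adjoint of the prolongation $l^\integers \colon \caA^\integers \to \caC^\integers$. Setting $A := r^\integers(P)$, Theorem (\ref{rep-th}) yields a canonical equivalence of tensor triangulated categories between $\D(A)$ and the localizing full triangulated subcategory of $\D(\unit) = \DM(k)$ spanned by the $\unit \otimes K^i = \integers(i)$, which is $\DM(k)_\caT$. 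Since $P$ is an algebra in $\Comm(\caC^\integers)$ and $r^\integers$ is lax symmetric monoidal, $A = r^\integers(P)$ is an $E_\infty$-algebra in $\caA^\integers$, i.e.\ a graded $E_\infty$-algebra in complexes of abelian groups, as required.

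There is no real obstacle: both inputs are already established in the excerpt. The only points to keep in mind are bookkeeping ones: that the categorical assumptions are needed precisely to run Theorem (\ref{mot-per}) (via Proposition (\ref{semi-per})), that one should apply Theorem (\ref{rep-th}) after replacing $\unit$ and $g$ by a cofibration between cofibrant objects with fibrant target as the statement allows, and that the identification $K^i = \integers(i)$ matches the generators of $\DM(k)_\caT$ with the spheres appearing in Theorem (\ref{rep-th}). Thus the corollary follows formally.

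\begin{proof}
Under our categorical assumptions, Theorem (\ref{mot-per}) provides a strong periodization $g \colon i(\unit) \to P$ of the unit sphere $\unit \in \Comm(\caC)$, where $\caC$ is the category of symmetric $\bbT$-spectra in $\Cpx(\shvnis(\smcor(k)))$ and $K$ is the image of $\bbT$. We are in the situation $l \colon \caA \to \caC$, so let $r^\integers$ denote the right adjoint of the prolongation $l^\integers \colon \caA^\integers \to \caC^\integers$, and set $A := r^\integers(P)$. Since $P$ is an $E_\infty$-algebra in $\caC^\integers$ and $r^\integers$ is lax symmetric monoidal, $A$ is an $E_\infty$-algebra in $\caA^\integers$.

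We have $\D(\unit) = \Ho\caC = \DM(k)$, and $\unit \otimes K^i = K^i = \integers(i)$, so the localizing full triangulated subcategory of $\D(\unit)$ spanned by the spheres $\unit \otimes K^i$, $i \in \integers$, is exactly $\DM(k)_\caT$. Replacing $\unit$ by a cofibrant model and $g$ by a cofibration between cofibrant objects with fibrant target (which changes neither the homotopy categories of modules nor $\D(\unit)$), Theorem (\ref{rep-th}) applies and yields a canonical equivalence of tensor triangulated categories $\D(A) = \D(r^\integers(P)) \simeq \DM(k)_\caT$.
\end{proof}
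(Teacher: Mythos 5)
Your proposal is correct and follows exactly the paper's own proof, which simply cites Theorem (\ref{mot-per}) and Theorem (\ref{rep-th}); you have merely spelled out the routine identifications ($E=\unit$, $\D(\unit)=\Ho\caC=\DM(k)$, $K^i=\integers(i)$, $A=r^\integers(P)$) that the paper leaves implicit.
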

\begin{proof}
 This follows from theorem (\ref{mot-per})
and theorem (\ref{rep-th}).
\end{proof}

Let now $X$ be a separated Noetherian scheme of finite Krull dimension over $k$.
Then $\check{\mathrm{C}}$isinski-Deglise have constructed a model category $\caC_X$ built up from finite correspondences
for smooth schemes over $X$ such that $\Ho(\caC_X)$ is a good model for $\DM(X)$. The model category
$\caC_X$ receives a symmetric monoidal left Quillen functor form $\caC$. Thus it follows from
theorem (\ref{mot-per}) that the tensor unit in $\caC_X$ is strongly periodizable.
Let $\DM(X)_\caT$ be the full localizing triangulated
subcategory of $\DM(X)$ generated by the $\integers(i)$. Then we get

\begin{corollary}
 Suppose our categorical assumptions hold. Then there is an $E_\infty$-algebra
$A$ in $\caA^\integers$ such that $\D(A)$ is equivalent as tensor triangulated category to $\DM(X)_\caT$.
\end{corollary}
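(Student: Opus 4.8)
The plan is to deduce this corollary from Theorem~\ref{mot-per} by transport of structure along the symmetric monoidal left Quillen functor $\Phi\colon\caC\to\caC_X$ produced by the $\check{\mathrm{C}}$isinski--D\'eglise construction, and then to feed the outcome into Theorem~\ref{rep-th}. First I would verify that $\caC_X$ fits the axiomatic setup of Section~\ref{per-alg} with the $\caA$-option: that it is cofibrantly generated, left proper and symmetric monoidal, meets the smallness and cofibrancy hypotheses, receives $\Phi$ from $\caC$ and hence, composing with $l\colon\caA\to\caC$, a symmetric monoidal left Quillen functor from $\caA$, and that $\Phi$ carries $\bbT$ to the $\otimes$-invertible object $K$ used for $\caC_X$. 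Since $\Phi$ is symmetric monoidal it intertwines the twist maps, so the hypothesis $\id\otimes\tau=\id$ on $K^{\otimes 2}$ needed to form periodizations is inherited from $\caC$, and likewise our categorical assumptions descend to $\caC_X$.

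Next I would transport the periodization itself. By Theorem~\ref{mot-per} the unit $\unit_\caC$ carries a strong periodization $g\colon i(\unit_\caC)\to P$ in $\Ho(\Comm(\caC^\integers))$. Applying the prolongation $\Phi^\integers\colon\caC^\integers\to\caC_X^\integers$, which is again symmetric monoidal and left Quillen, and using that symmetric monoidal functors preserve units, one obtains a map $i(\unit_{\caC_X})\to P_X$ of $E_\infty$-algebras in $\caC_X^\integers$, where $P_X:=\bL\Phi^\integers(P)$. Because $\Phi$ preserves $K$ and the twist, it carries the periodization monoid $P(\unit_\caC)=\bigsqcup_i\unit_\caC\otimes K^{\otimes i}$ to $P(\unit_{\caC_X})$, so the defining isomorphism $P\cong P(\unit_\caC)$ in $\D(\unit_\caC)^\integers$ becomes an isomorphism $P_X\cong P(\unit_{\caC_X})$ of monoids in $\D(\unit_{\caC_X})^\integers=\DM(X)^\integers$; that is, $i(\unit_{\caC_X})\to P_X$ is a strong periodization of $\unit_{\caC_X}$. (Alternatively one may argue as in Proposition~\ref{alg-periodizable}, now reading $\caC_X$ as the ambient category.) Then I would apply Theorem~\ref{rep-th} with $\caC_X$, $\unit_{\caC_X}$ and this strong periodization in place of $\caC$, $E$, $g$, after replacing so that $g$ is a cofibration and $P_X$ fibrant. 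Setting $A:=r^\integers(P_X)$ --- an $E_\infty$-algebra in $\caA^\integers$ because the right adjoint $r^\integers$ is lax symmetric monoidal --- Theorem~\ref{rep-th} yields an equivalence of tensor triangulated categories between $\D(A)$ and the localizing full triangulated subcategory of $\D(\unit_{\caC_X})=\DM(X)$ spanned by the $\unit_{\caC_X}\otimes K^i=\integers(i)$, $i\in\integers$, which is exactly $\DM(X)_\caT$.

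The hard part is really only the first step: pinning down that the $\check{\mathrm{C}}$isinski--D\'eglise model $\caC_X$ genuinely satisfies every hypothesis of Section~\ref{per-alg} and that the comparison functor from $\caC$ is symmetric monoidal left Quillen with $\Phi(\bbT)=K$. This is the sole non-formal input and I would extract it from their construction; everything after it is transport of structure. I would also note that the categorical assumptions are needed only through Theorem~\ref{mot-per} (hence Proposition~\ref{semi-per}), whereas Theorem~\ref{rep-th} itself requires no further hypotheses.
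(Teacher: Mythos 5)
Your proof is correct and takes essentially the same route as the paper: the paper's argument consists precisely of the observation that the Cisinski--D\'eglise model $\caC_X$ receives a symmetric monoidal left Quillen functor from $\caC$, so the strong periodization of the unit furnished by Theorem~\ref{mot-per} transports to a strong periodization of $\unit_{\caC_X}$, after which Theorem~\ref{rep-th} gives the desired graded $E_\infty$-algebra in $\caA^\integers$ and the tensor triangulated equivalence with $\DM(X)_\caT$. You spell out the transport step and the hypothesis-checking for $\caC_X$ in more detail than the paper, but the structure of the argument is identical.
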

\noindent

\bibliographystyle{plain}
\bibliography{per}

\begin{center}
Fakult{\"a}t f{\"u}r Mathematik, Universit{\"a}t Regensburg, Germany.\\
e-mail: Markus.Spitzweck@mathematik.uni-regensburg.de
\end{center}

\end{document}